\documentclass[12pt,a4paper,reqno]{amsart}
\usepackage{amsmath}
\usepackage{amsfonts}
\usepackage{amssymb}
\usepackage{amsthm}
\usepackage{enumerate}
\usepackage{centernot}
\usepackage{fullpage}
\usepackage{mathrsfs}
\usepackage{graphicx}
\usepackage[colorlinks,
linkcolor=blue,
anchorcolor=green,
citecolor=blue
]{hyperref}
\usepackage{color}

\newcommand{\arctanh}{\operatorname{arctanh}}

\theoremstyle{plain}

  \newtheorem{proposition}[]{Proposition}
  \newtheorem{lemma}[]{Lemma}
  \newtheorem{theorem}[]{Theorem}
  \newtheorem{corollary}[]{Corollary}
  
  \newtheorem{remark}[]{Remark}

\title[The free state for the Potts model]{The free state for the Potts model on Cayley trees is either extremal or glassy}

\author{Jianping Jiang}
\address{Yau Mathematical Sciences Center, Tsinghua University, Beijing 100084, China.}
\email{jianpingjiang@tsinghua.edu.cn}
\author{Sike Lang}
\address{Qiuzhen College, Tsinghua University, Beijing 100084, China.}
\email{langsk24@mails.tsinghua.edu.cn}

\begin{document}
\dedicatory{Dedicated to Chuck Newman on the occasion of his 80th birthday.}
\begin{abstract}
For the Potts model on the Cayley tree $\mathbb{T}^d$ with branching factor $d\geq 2$, we consider the free state which is obtained as the limiting Gibbs measure with free boundary conditions. We prove that the free state is either extremal or glassy (i.e., whose decomposition into extremal Gibbs measures contains uncountably many components). As a corollary, the free state for the Ising model on $\mathbb{T}^d$ is glassy if and only if the inverse temperature $\beta>\arctanh(1/\sqrt{d})$; this generalizes a previous result by Gandolfo, Maes, Ruiz and Shlosman (2020) from a very low temperature regime to the entire spin-glass regime.
\end{abstract}

\maketitle

\section{Introduction and main result}
Let $\mathbb{T}^d$ be the Cayley tree with branching factor $d\geq 2$, i.e., each vertex has degree $d+1$. Due to its rich mathematical formulation, the $q$-state Potts model on $\mathbb{T}^d$ has been studied extensively for decades \cite{Pre74,Geo11,Roz13,Roz21,Roz22}. 

Let us focus on the most studied case $q=2$ (the Ising model) for a moment. It is well-known that there is a unique Gibbs measure if and only if the inverse temperature $\beta \leq \arctanh(1/d)$ (see, e.g., \cite{Pre74}). The infinite-volume Gibbs measure with free boundary conditions is usually called the \textit{free state} (or the \textit{disordered state}). Bleher, Ruiz and Zagrebnov \cite{Ble90,BRZ95} proved that the free state is extremal if and only if $\beta \leq \arctanh(1/\sqrt{d})$; see also \cite{Iof96} for a different and elegant proof. It turns out that the free state is non-extremal if and only if the associated broadcasting process is reconstructible \cite{EKPS00,Mos04}. The next natural question is: what is the decomposition of the free state into extremal Gibbs measures? Gandolfo, Maes, Ruiz and Shlosman \cite{GMRS20} proved that a continuum of extremal states are contained in this decomposition when $\beta$ is sufficiently large; this result was later generalized to general $q$ and broader finite-spin models in \cite{CKLN26}. However, both \cite{GMRS20} and \cite{CKLN26} use the Peierls contour argument which requires $\beta$ to be large. In this paper, we overcome this barrier by proving a zero-one law for an overlap order parameter. As a consequence, we extend the continuum decomposition result (for the Ising model on $\mathbb{T}^d$) to the entire non-extremality regime $\beta>\arctanh(1/\sqrt{d})$.

For a finite subset $\Lambda$ in the vertex set $V(\mathbb{T}^d)$, the $q$-state Potts model (with $q \geq 2$) at inverse temperature $\beta$ on the subgraph induced by $\Lambda$ with boundary conditions $\eta \in \{1,\dots,q\}^{\Lambda^c}$ is the probability measure
\begin{equation}\label{eq:Pottsdef}
	\mu^{\eta}_{\Lambda,\beta}(\sigma)=\frac{1}{Z^{\eta}_{\Lambda,\beta}}\exp\left[\beta\sum_{u,v\in\Lambda: u\sim v}\delta_{\sigma_u,\sigma_v}+\beta\sum_{u\in\Lambda, v\in \Lambda^c: u\sim v}\delta_{\sigma_u,\eta_v}\right],\quad \forall \sigma \in \{1,\dots, q\}^{\Lambda},
\end{equation}
where $\delta_{i,j}$ is the Kronecker delta, $u\sim v$ denotes a nearest-neighbor edge $uv$ in the edge set $ E(\mathbb{T}^d)$, and $Z^{\eta}_{\Lambda,\beta}$ is the partition function. Let $\mu_{\Lambda,\beta}^{\varnothing}$ be the Potts model on $\Lambda$ with free boundary conditions (i.e., removing the second sum in \eqref{eq:Pottsdef}). By Kolmogorov's extension theorem (see, e.g., Theorem 6.6 of \cite{FV18}), there exists a unique measure $\mu_{\beta}^{\varnothing}$ on $\{1,\dots,q\}^{V(\mathbb{T}^d)}$ such that for each $\Lambda\Subset V(\mathbb{T}^d)$ (we use $\Subset$ for a finite subset) with its induced subgraph being connected,
\begin{equation}\label{eq:Kol}
	\mu_{\beta}^{\varnothing}(\sigma_u=s_u, \forall u \in \Lambda)=\mu_{\Lambda,\beta}^{\varnothing}(\sigma_u=s_u, \forall u \in \Lambda),\quad \forall  s\in \{1,\dots,q\}^{\Lambda}.
\end{equation}
 A measure $\mu$ on $\{1,\dots,q\}^{V(\mathbb{T}^d)}$ is called a \textit{Gibbs measure} if it satisfies the Dobrushin-Lanford-Ruelle (DLR) equations
\begin{equation}\label{eq:DLR}
	\mu(A)=\int \mu_{\Lambda,\beta}^{\eta}(A) \mu (d\eta), \quad \forall \Lambda\Subset V(\mathbb{T}^d), \forall \text{ local event } A \text{ in } \Lambda.
\end{equation}
Let $\mathscr{G}_{\beta}$ be the set of all Gibbs measures. One may check directly that the free state $\mu_{\beta}^{\varnothing}\in \mathscr{G}_{\beta}$. A measure $\mu \in \mathscr{G}_{\beta}$ is \textit{extremal} (or a \textit{pure state}) if it cannot be expressed as $\mu=\lambda \mu_1+(1-\lambda)\mu_2$ with $\lambda \in (0,1)$ and $\mu_1\neq \mu_2 \in \mathscr{G}_{\beta}$. The set of all extremal measures in $\mathscr{G}_{\beta}$  is denoted by $\text{ex}\mathscr{G}_{\beta}$. It is well-known (see, e.g., Theorem~6.72 of \cite{FV18} or Theorem~7.26 of \cite{Geo11}) that
\begin{equation}\label{eq:extremaldecom}
	\mu(\cdot) = \int_{\text{ex}\mathscr{G}_{\beta}} \nu(\cdot) \lambda_{\mu} (d\nu),\quad \forall \mu \in \mathscr{G}_{\beta},
\end{equation}
where $\lambda_{\mu}$ is the unique measure on $\text{ex}\mathscr{G}_{\beta}$ for which such a representation holds. Our main result is the following dichotomy for the extremal decomposition of $\mu_{\beta}^{\varnothing}$.

\begin{theorem}\label{thm}
	The free state for the Potts model on $\mathbb{T}^d$ with $d \geq 2$ for each $\beta \geq 0$ satisfies the following dichotomy:
	\begin{enumerate}[(i)]
		\item either $\mu_{\beta}^{\varnothing}$ is extremal, i.e., $\mu_{\beta}^{\varnothing}\in \textnormal{ex}\mathscr{G}_{\beta}$,
		\item or the extremal decomposition measure $\lambda_{\mu_{\beta}^{\varnothing}}$ has no atoms, i.e., $\lambda_{\mu_{\beta}^{\varnothing}}(\{\nu\})=0$ for each $\nu\in \textnormal{ex}\mathscr{G}_{\beta}$. In particular, the support of $\lambda_{\mu_{\beta}^{\varnothing}}$ contains uncountably many extremal measures.
	\end{enumerate}
\end{theorem}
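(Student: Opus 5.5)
The plan is to prove a zero-one law for an overlap order parameter, following the hint in the introduction. Let $\sigma,\sigma'$ be two independent samples from $\mu_\beta^{\varnothing}$, and let $\rho$ be the root. Write $\Lambda_n$ for the ball of radius $n$ around $\rho$ and $\mathcal{F}_n$ for the $\sigma$-algebra generated by spins outside $\Lambda_n$. Define
\[
Q_n(\sigma,\sigma') := \sum_{s=1}^q \mu_\beta^{\varnothing}(\sigma_\rho = s \mid \mathcal{F}_n)(\sigma)\,\mu_\beta^{\varnothing}(\sigma_\rho = s\mid \mathcal{F}_n)(\sigma') .
\]
By backward martingale convergence, $\mu_\beta^{\varnothing}(\sigma_\rho=s\mid\mathcal{F}_n)$ converges a.s. to $\nu_\sigma(\sigma_\rho=s)$, where $\nu_\sigma$ is the extremal component containing $\sigma$. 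The tail-measurable decomposition identifies $\lambda_{\mu_\beta^{\varnothing}}$ with the law of $\nu_\sigma$. Hence $Q_n \to Q := \sum_s \nu_\sigma(\sigma_\rho=s)\nu_{\sigma'}(\sigma_\rho=s)$ almost surely. The same holds for every vertex $v$, not just $\rho$.

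The key structural input is the tree recursion together with the $S_q$-symmetry of the free state. On a tree, the free measure is a Markov chain (broadcast process) from the root with uniform marginal. The posterior of $\sigma_v$ given the boundary at distance $n$ is a deterministic function of the posteriors at its children. Under free boundary conditions the subtrees below distinct children are independent given their parents.

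Suppose $\lambda := \lambda_{\mu_\beta^{\varnothing}}$ has an atom $\nu_0$ with mass $a>0$. Then with probability at least $a^2$ we have $\nu_\sigma=\nu_{\sigma'}=\nu_0$.

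I want to deduce a contradiction unless $\mu_\beta^{\varnothing}$ is extremal. The idea is to show the event $E := \{\nu_\sigma = \nu_{\sigma'}\}$ is a tail event for the product chain $(\sigma,\sigma')$. The product chain is again a tree-indexed Markov chain, with $q^2$ states and free boundary. Then I prove a zero-one law: every tail event of a free-boundary tree-indexed Markov chain that is invariant under the automorphisms of $\mathbb{T}^d$ fixing the root, and under global color permutations, has probability $0$ or $1$.

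The route to this zero-one law is as follows.
\begin{enumerate}[(a)]
\item Condition on the root spin pair, which takes finitely many values.
\item The indicator of $E$ restricted to the subtree $T_v$ below a child $v$ gives events $E_v$. These are i.i.d. across children given the parents' spins, by the Markov property. Moreover, $E$ is determined by the $E_v$ because $\nu_\sigma$ is determined by the tails of the subtrees.
\item Iterating, $p_n := \mu(E\mid \sigma_v,\sigma'_v)$ at generation $n$ satisfies a fixed-point equation. Together with symmetry (the product chain's uniform stationary measure, and the event being defined symmetrically), this forces $p\in\{0,1\}$. The argument is a Lévy 0-1 law: $\mu(E\mid\mathcal{G}_n)\to 1_E$, where $\mathcal{G}_n$ is generated by the spins in $\Lambda_n$. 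By the Markov property and the $\mathrm{Aut}$-invariance of $E$, this conditional probability equals a function of the generation-$n$ spin pairs with a bounded number of types. That lets one show it is constant a.s., hence $0$ or $1$.
\end{enumerate}

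Given the zero-one law, we have $\mu\otimes\mu(E)=\sum_\nu \lambda(\{\nu\})^2 \in\{0,1\}$. This sum is positive if there is an atom, so it equals $1$. But $\sum_\nu\lambda(\{\nu\})^2=1$ forces $\lambda$ to be a point mass, which means $\mu_\beta^{\varnothing}$ is extremal. This gives the dichotomy. An atomless probability measure has uncountable support, which gives the last sentence of the theorem.

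The main obstacle is step (c): making rigorous that $\mu(E\mid\mathcal{G}_n)$ depends only on the finitely many types of spin pairs at generation $n$ and that this forces triviality. Tail events of tree-indexed chains need not be trivial; the non-extremal free state itself is the counterexample. So the invariance of $E$ under swapping and recoloring subtrees must genuinely be used.

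What I would try first is to write $1_E$ as $\prod$-type combinations of subtree events via $\nu_\sigma=\nu_{\sigma'}$ iff the root-posterior limits agree on every subtree. Then I would use a second-moment bound: the variance of $\mu(E\mid\mathcal{G}_n)$ is a sum over $d^n$ nearly independent subtrees, so fluctuations of the overlap average out. If that fails, I would instead work directly with the overlap $Q$ and show $\mathrm{Var}(Q)=0$ or $Q$ is a.s. $1/q$-valued versus concentrated away.
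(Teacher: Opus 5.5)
Your outer structure matches the paper's. You study the overlap event $E=\{\nu_\sigma=\nu_{\sigma'}\}$, prove $\mu_\beta^{\varnothing}\otimes\mu_\beta^{\varnothing}(E)\in\{0,1\}$, and note that an atom forces this probability strictly between $0$ and $1$ unless $\lambda_{\mu_\beta^{\varnothing}}$ is a point mass. Your $\sum_\nu\lambda(\{\nu\})^2$ formulation is equivalent to the paper's bound $\epsilon^2\le Q\le\epsilon^2+(1-\epsilon)^2$.

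The gap is step (c), which carries essentially all the content. The general zero-one law you state is false. Take $q=2$ with $\mu_\beta^{\varnothing}$ non-extremal, and the event $\{|\nu_\sigma(\sigma_\rho=1)-\nu_\sigma(\sigma_\rho=2)|>t\}$, which depends on the first replica only. It is a tail event invariant under all root-fixing automorphisms and all colour permutations. Yet it is nontrivial for suitable $t$: the root field of the extremal component is a sum of $d+1$ conditionally i.i.d.\ non-constant terms, so its absolute value is not a.s.\ constant. Likewise, "$\mu(E\mid\mathcal G_n)$ is a function of the generation-$n$ spin pairs" holds for every tail event by the Markov property. It yields no constancy, and cannot, since $\mu(E\mid\mathcal G_n)\to 1_E$. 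The symmetry and uniform stationarity you invoke play no role.

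What makes $E$ special is a multiplicative structure, which you only gesture at. First you need the characterization in Proposition~\ref{Prop:r}: two extremal measures coincide if and only if their limiting subtree likelihood ratios $\vec r_x$ agree for all $x$ beyond some level. Here $\vec r_x$ is computed on $\mathbb T^d_x$ with boundary condition acting only from below. Both directions need proof: extremality makes $\vec r_x$ deterministic, and $(\vec r_x)_{x\in W_m}$ determines the marginal on $V_m$.

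This gives $E=\bigcap_{x\in W_n}E_x$, with $E_x$ measurable with respect to the two replicas on $\mathbb T^d_x$. Hence $\mu\otimes\mu(E\mid\mathcal G_n)=\prod_{x\in W_n}c_{\sigma_x\sigma'_x}$, where $c_{kl}$ is the conditional overlap on the $d$-ary tree given root spins $(k,l)$. In particular $c_{ij}=\bigl[\sum_{k,l}p_{ik}p_{jl}c_{kl}\bigr]^d$.

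The decisive point is the exponent $d\ge 2$, not symmetry. With $M=\max c_{ij}$ you get $M\le M^d$, so $M\in\{0,1\}$. Since all $p_{ik}>0$, equality forces $c_{kl}=M$ for every $k,l$, and the overlap on $\mathbb T^d$ is $M^{d+1}$. Your L\'evy route would also close once you have the product formula: if some $c_{kl}<1$, the number of $x\in W_n$ carrying a pair with $c<1$ tends to infinity a.s., so $1_E=0$ a.s. Without this formula, the variance heuristics in your last paragraph have nothing concrete to act on.
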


\begin{remark}
	Our proof strategy for Theorem \ref{thm} can also be applied to the Potts model with uniform external field on $\mathbb{T}^d$. In Section \ref{sec:gen}, we show that the same dichotomy holds for all completely homogeneous splitting Gibbs measures defined in \cite{BR19} (also called translation-invariant splitting Gibbs measures in \cite{KRK14,KR17}).
\end{remark}

Following \cite{GMRS20}, we call a free state \textit{glassy} when it satisfies part (ii) of the theorem. Theorem \ref{thm} may be combined with the known results on extremality problem (or reconstruction problem) to find the exact threshold. For example, for the classical Ising model on $\mathbb{T}^d$ at the inverse temperature $\beta$, which is the $2$-state Potts model defined by \eqref{eq:Pottsdef} at $2\beta$,  combining Theorem \ref{thm} with \cite{Ble90,BRZ95,Iof96}, we obtain
\begin{corollary}\label{cor}
	The free state $\mu_{\beta}^{\varnothing}$  for the classical Ising model on $\mathbb{T}^d$ satisfies:
	\begin{enumerate}[(i)]
		\item if $0\leq \beta \leq \arctanh(1/\sqrt{d})$, then $\mu_{\beta}^{\varnothing}$  is extremal,
		\item if $\beta >\arctanh(1/\sqrt{d})$, then its extremal decomposition measure $\lambda_{\mu_{\beta}^{\varnothing}}$ has no atoms and thus is supported on uncountably many extremal measures.
	\end{enumerate}
\end{corollary}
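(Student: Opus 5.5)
The plan is to reduce the statement to Theorem \ref{thm} together with the known extremality threshold. The first step is to identify the classical Ising model with the $2$-state Potts model. With spins $\tau_u\in\{-1,+1\}$ and the relabelling $\sigma_u=1$ if $\tau_u=+1$, $\sigma_u=2$ if $\tau_u=-1$, one has $\tau_u\tau_v=2\delta_{\sigma_u,\sigma_v}-1$. Hence, for every finite $\Lambda$ and every boundary condition $\eta$, the weight $\exp[\beta\sum\tau_u\tau_v]$ differs from the weight in \eqref{eq:Pottsdef} at inverse temperature $2\beta$ only by a multiplicative constant. This constant depends on $\Lambda$ (and on the number of boundary edges) but not on the configuration in $\Lambda$. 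It therefore cancels in the normalisation, so the finite-volume specifications coincide, including the free ones.

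As a consequence, the sets of Gibbs measures defined via the DLR equations \eqref{eq:DLR} coincide. Their sets of extremal elements and the extremal decompositions \eqref{eq:extremaldecom} coincide too, and so do the free states obtained through \eqref{eq:Kol}. Thus every assertion below about the Ising free state at $\beta$ is an assertion about $\mu^{\varnothing}_{2\beta}$ for the $2$-state Potts model, to which Theorem \ref{thm} applies.

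For part (i), I would invoke the theorem of Bleher, Ruiz and Zagrebnov \cite{Ble90,BRZ95} (see also \cite{Iof96}): the Ising free state on $\mathbb{T}^d$ is extremal if and only if $\beta\le\arctanh(1/\sqrt d)$. This gives (i) directly.

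For part (ii), take $\beta>\arctanh(1/\sqrt d)$. By the same cited result, $\mu^{\varnothing}_\beta$ is not extremal, so alternative (i) of Theorem \ref{thm} fails. The dichotomy then forces alternative (ii): $\lambda_{\mu^{\varnothing}_\beta}(\{\nu\})=0$ for every $\nu\in\text{ex}\mathscr{G}_\beta$.

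The remaining claim is that the support is uncountable, which follows from the absence of atoms. Suppose the support $S$ of $\lambda=\lambda_{\mu^{\varnothing}_\beta}$ were countable. The space of probability measures on $\{1,2\}^{V(\mathbb{T}^d)}$ with the weak topology is compact metrizable, so $\lambda$ gives full mass to its closed support. Then $1=\lambda(S)=\sum_{\nu\in S}\lambda(\{\nu\})=0$, a contradiction.

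The only step needing any care is this topological bookkeeping: one must check that $\text{ex}\mathscr{G}_\beta$ is a measurable subset of a Polish space, so that "support" is meaningful and carries full measure. This is standard (see Theorem~7.26 of \cite{Geo11}). Everything else is a direct combination of Theorem \ref{thm} with the cited threshold.
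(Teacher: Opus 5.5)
Your proposal is correct and takes the paper's route: identify the classical Ising model at $\beta$ with the $2$-state Potts model at $2\beta$, take the extremality threshold $\arctanh(1/\sqrt{d})$ from \cite{Ble90,BRZ95,Iof96}, and use Theorem~\ref{thm} to upgrade non-extremality to an atomless decomposition. Your argument for why an atomless $\lambda$ has uncountable support is a slightly more careful version of the paper's one-line remark at the end of the proof of Theorem~\ref{thm}. One small addition: $S\cap\text{ex}\mathscr{G}_\beta$ also has full $\lambda$-measure, which is what makes uncountably many of the support points extremal.
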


The short-range spin glass model on $\mathbb{T}^d$ is defined by assigning i.i.d. symmetric $\pm 1$ couplings on all nearest-neighbor edges. For the spin glass on $\mathbb{T}^d$ with fixed boundary conditions, which is equivalent to the ferromagnetic Ising model on $\mathbb{T}^d$ with random boundary conditions by a gauge transformation, it was proved in \cite{CCST86} that the spin glass phase transition occurs at $\beta_{SG}=\arctanh(1/\sqrt{d})$. Since the existence of uncountably many extremal Gibbs states is a hallmark of the spin glass phase \cite{NS07}, our Corollary \ref{cor} suggests that one may already observe such a phase transition from the free state of the ferromagnetic Ising model; see Proposition 4.2 of \cite{GMRS20} for a rigorous justification of the spin glass transition. This should be contrasted with the free state of the Ising model on $\mathbb{Z}^d$, where it is known that the free state is extremal if $\beta \leq \beta_c(\mathbb{Z}^d)$ \cite{ABF87,AF86,ADCS15,Yan52} and is $(\mu^++\mu^-)/2$ where $\mu^+$ and $\mu^-$ are the plus and minus Gibbs states \cite{Aiz80,Hig79,Bod06}.

When $q\geq 3$, rigorous results on the extremality threshold can be found in \cite{Sly11}. In particular, the asymptotic values of thresholds were established for large $d$ when $q\neq 4$. For example, when $q=3$, Theorem \ref{thm} and Theorem 1.1 of \cite{Sly11} together imply that for all sufficiently large $d$, $\mu_{\beta}^{\varnothing}$ is glassy if and only if $(e^{\beta}-1)/(e^{\beta}+2)>1/\sqrt{d}$. 

Motivated by the Edwards-Anderson order parameter~\cite{EA75}, which is the overlap in two independent replicas of the model, we introduce
\begin{equation}\label{eq:Q}
	Q:=\mu_{\beta}^{\varnothing} \otimes \mu_{\beta}^{\varnothing}\left(\{(\eta^{(1)},\eta^{(2)}): \mu^{\eta^{(1)}}=\mu^{\eta^{(2)}}\}\right),
\end{equation}
which measures the overlap in two independent extremal decompositions of $\mu_{\beta}^{\varnothing}$ (see \eqref{eq:muetadef} below for the exact definition of $\mu^{\eta}$). It is clear that $Q=1$ if $\beta$ is small (or more precisely, when $\mu_{\beta}^{\varnothing}$ is extremal), the main results of \cite{GMRS20,CKLN26} imply that $Q=0$ for all large $\beta$. The key novelty of the current paper is to prove a zero-one law for this overlap $Q$. We first derive a necessary and sufficient condition for two extremal Gibbs measures to be equal (see Proposition \ref{Prop:r} below). Then we study the conditional overlaps by conditioning on the spin values at the roots. The tree structure, Proposition \ref{Prop:r}, and Edwards-Sokal coupling \cite{ES88} enable us to establish a system of equations on these conditional overlaps. The zero-one law follows by solving these equations. We have organized our arguments so that they can be applied to broader models, which include the Potts model with external field (see Section~\ref{sec:gen}).

\section{Preliminaries}
In this section, we prove some basic properties on extremal measures which will be useful in the proof of our main result.

We fix an arbitrary vertex $o \in V(\mathbb{T}^d)$ and call it the \textit{root}. For $x,y\in V(\mathbb{T}^d)$, the distance $d(x,y)$ on $\mathbb{T}^d$ is the number of edges in the unique path consisting of nearest-neighbor edges from $x$ to $y$. One may define a partial order on $\mathbb{T}^d$ by $y\succcurlyeq x$ if and only if the unique path from the root $o$ to $y$ passes through $x$. For each $\Lambda \Subset V(\mathbb{T}^d)$, we define the set of boundary vertices by
\[\partial \Lambda:=\{ u\in V(\mathbb{T}^d)\setminus \Lambda: \exists v \in \Lambda \text{ such that } d(u,v)=1\}.\]

Let $\Omega_0:=\{1,\dots,q\}$, $\Omega_{\Lambda}:=\Omega_0^{\Lambda}$ and $\Omega:=\Omega_0^{V(\mathbb{T}^d)}$. For each $\Lambda \subset V(\mathbb{T}^d)$, let $\mathcal{F}_{\Lambda}$ be the $\sigma$-algebra generated by all \textit{local events} in $\Lambda$ (i.e., all events that depend on finitely many spins, all of which are located in $\Lambda$).  We write $\mathcal{F}$ for $\mathcal{F}_{V(\mathbb{T}^d)}$. The \textit{tail $\sigma$-algebra} is defined by $\mathcal{F}_{\infty}:=\cap_{\Lambda \Subset V(\mathbb{T}^d)} \mathcal{F}_{\Lambda^c}$ where the intersection is over all finite subsets.  For each $x\in V(\mathbb{T}^d)$, let $\mathbb{T}^d_x$ denote the subtree rooted at $x$, which is the tree induced by the vertex set $\{x\}\cup\{y\in V(\mathbb{T}^d): y \succcurlyeq x\}$. For $n\in \mathbb{N}\cup \{0\}$, let 
\[W_n:=\{x\in V(\mathbb{T}^d): d(x,o)=n\}\]
be the vertices on the $n$th level, and 
\[V_n:=\cup_{k=0}^n W_k = \{x\in V(\mathbb{T}^d): d(x,o)\leq n\}.\]

If $\mu \in \mathscr{G}_{\beta}$, then the DLR equations \eqref{eq:DLR} imply that
\[\mu(A \cap B)=\int_B \mu_{\Lambda,\beta}^{\eta}(A) \mu(d\eta),\ \forall \Lambda\Subset V(\mathbb{T}^d), \forall A\in \mathcal{F}_{\Lambda}, \forall \text{ local event }B\in \mathcal{F}_{\Lambda^c}.\]
On the other hand, by the definition of the conditional expectation,
\begin{equation*}
	\mu(A \cap B)=\int_B \mu(A | \mathcal{F}_{\Lambda^c})(\eta) \mu(d\eta).
\end{equation*}
Then the almost sure uniqueness of the conditional expectation implies that for each local event $A\in \mathcal{F}_{\Lambda}$,
\begin{equation}\label{eq:DLRalt}
	\mu(A | \mathcal{F}_{\Lambda^c})(\eta)=\mu_{\Lambda,\beta}^{\eta}(A),\ \mu \text{-almost all }\eta.
\end{equation}

The backward martingale convergence theorem gives that for each local event $A$,
\begin{equation}\label{eq:backmart}
	\lim_{n\rightarrow\infty} \mu(A | \mathcal{F}_{V_n^c})(\eta)=\mu(A | \mathcal{F}_{\infty})(\eta), \ \mu \text{-almost all }\eta.
\end{equation}
Since $\{\mu_{V_n,\beta}^{\eta}: n\in \mathbb{N}\}$ is sequentially compact (see, e.g., Theorem 6.24 of \cite{FV18}), for each $\mu \in \mathscr{G}_{\beta}$, we just proved that the following convergence of measures (when evaluated at local events) is well defined for $\mu$-almost all $\eta$,
\begin{equation}\label{eq:muetadef}
	\lim_{n\rightarrow \infty}\mu_{V_n,\beta}^{\eta}=:\mu^{\eta}.
\end{equation}

\begin{proposition}\label{Prop:extrequi}
	Let $\mu \in \mathscr{G}_{\beta}$. Then
	\begin{enumerate}[(i)]
		\item The $\mu^{\eta}$ defined in \eqref{eq:muetadef} is in $\textnormal{ex}\mathcal{G}_{\beta}$ for $\mu$-almost all $\eta$ and
		\begin{equation}\label{eq:extremaldecom1}
			\mu(\cdot)=\int_{\Omega} \mu^{\eta}(\cdot) \mu(d\eta).
		\end{equation}
		\item $\mu\in \textnormal{ex}\mathscr{G}_{\beta}$ if and only if $\lim_{n\rightarrow \infty}\mu_{V_n, \beta}^{\eta}=\mu$ for $\mu$-almost all $\eta$.
	\end{enumerate}
\end{proposition}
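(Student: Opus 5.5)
The plan is to go through the tail $\sigma$-algebra and the standard characterization of extremal Gibbs measures as those that are trivial on $\mathcal{F}_\infty$. By \eqref{eq:DLRalt} and \eqref{eq:backmart}, for each local event $A$ we have $\mu^{\eta}(A)=\mu(A\mid\mathcal{F}_\infty)(\eta)$ for $\mu$-almost all $\eta$. Since there are only countably many local events (cylinder sets), one common null set can be discarded. We then fix a regular conditional distribution $\mu(\cdot\mid\mathcal{F}_\infty)(\eta)$, which exists because $\Omega$ is a compact metric space. Off a $\mu$-null set this coincides with $\mu^\eta$ on all cylinders, and therefore as measures. Taking expectations gives \eqref{eq:extremaldecom1} directly: $\int \mu^\eta(A)\,\mu(d\eta)=\int\mu(A\mid\mathcal{F}_\infty)\,d\mu=\mu(A)$ for local $A$, and this extends to all of $\mathcal{F}$ by the $\pi$-$\lambda$ theorem.

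For part (i) we must also show that $\mu^\eta\in\mathrm{ex}\mathscr{G}_\beta$ almost surely, and we split this into two steps. \textbf{Gibbs property.} Fix $\Lambda\Subset V(\mathbb{T}^d)$ and choose $n$ with $V_n\supset\Lambda$. Because the specification is consistent, $\mu^{\eta}_{V_m,\beta}$ satisfies the DLR equation for $\Lambda$ whenever $m\ge n$. Moreover, $\mu^{\xi}_{\Lambda,\beta}(A)$ depends only on the finitely many spins $\xi|_{\partial\Lambda}$ and is therefore a local function. Passing to the limit $m\to\infty$ in the identity $\int\mu^{\xi}_{\Lambda,\beta}(A)\,\mu^\eta_{V_m,\beta}(d\xi)=\mu^\eta_{V_m,\beta}(A)$ shows that $\mu^\eta$ satisfies \eqref{eq:DLR}. \textbf{Tail triviality.} We use the fact that a regular conditional probability given $\mathcal{F}_\infty$ is $\mu$-a.s.\ trivial on $\mathcal{F}_\infty$. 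To prove this, take a countable generating family of $\mathcal{F}_\infty$; this is the step we expect to be the main obstacle, since $\mathcal{F}_\infty$ need not be countably generated. The standard way around it (as in Georgii, Theorem 7.7, or the proof of Theorem 6.72 in \cite{FV18}) runs as follows. For each tail event $B$, $\mu^\eta(B)=\mathbf{1}_B(\eta)$ a.s., because $\mu(B\mid\mathcal{F}_\infty)=\mathbf{1}_B$. Then consider the tail-measurable function $\eta\mapsto\mu^\eta(A)$ for local $A$, together with the event $\{\xi:\mu^\xi=\mu^\eta\}$, which lies in $\mathcal{F}_\infty$ because it is defined by limits of quantities that are measurable with respect to the outside of $V_n$. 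One checks that $\mu^\eta(\{\xi:\mu^\xi(A)=\mu^\eta(A)\ \forall A \text{ local}\})=1$ a.s. This shows that $\mu^\eta$-a.e.\ $\xi$ has $\mu^\xi=\mu^\eta$, that is, $\mu^\eta$ is a Gibbs measure whose own decomposition is trivial. Finally, tail triviality is equivalent to extremality (Theorem 6.58 of \cite{FV18}), so $\mu^\eta\in\mathrm{ex}\mathscr{G}_\beta$.

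For part (ii), suppose first that $\mu$ is extremal. Then $\mu$ is trivial on $\mathcal{F}_\infty$, so $\mu(A\mid\mathcal{F}_\infty)=\mu(A)$ a.s. for each local $A$. By \eqref{eq:backmart} and \eqref{eq:DLRalt}, $\mu^\eta_{V_n,\beta}(A)\to\mu(A)$ for all local $A$ off a common null set, so $\mu^\eta=\mu$ a.s. Conversely, suppose $\mu^\eta=\mu$ a.s. Then for every local $A$ we have $\mu(A\mid\mathcal{F}_\infty)=\mu(A)$ a.s. By a monotone class argument, every $\mathcal{F}$-measurable bounded function $f$ satisfies $E[f\mid\mathcal{F}_\infty]=E f$. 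Applying this to $f=\mathbf{1}_B$ with $B\in\mathcal{F}_\infty$ gives $\mathbf{1}_B=\mu(B)$ a.s., so $\mu(B)\in\{0,1\}$. Thus $\mu$ is tail trivial and hence extremal. Alternatively, (ii) follows from (i): if $\mu=\lambda\mu_1+(1-\lambda)\mu_2$, then $\mu_i\ll\mu$, and $\mu_i^\eta=\mu$ $\mu_i$-a.s., so \eqref{eq:extremaldecom1} applied to $\mu_i$ gives $\mu_i=\mu$.
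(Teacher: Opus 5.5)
Your proposal is correct and follows the same route as the paper: identify $\mu^\eta$ with $\mu(\cdot\mid\mathcal{F}_\infty)(\eta)$ through \eqref{eq:DLRalt} and \eqref{eq:backmart}, integrate to get \eqref{eq:extremaldecom1}, and deduce extremality of $\mu^\eta$ from tail triviality via Theorem 6.58 of \cite{FV18}. It is more careful than the paper's proof, which neither checks that $\mu^\eta$ is Gibbs nor addresses that the null set in $\mu^\eta(B)=\mathbf{1}_B(\eta)$ depends on $B\in\mathcal{F}_\infty$; your countable family of tail level sets of $\xi\mapsto\mu^\xi(A)$ handles this, and the remaining step from ``$\mu^\xi=\mu^\eta$ for $\mu^\eta$-a.e.\ $\xi$'' to tail triviality is exactly your converse argument in (ii) applied to the Gibbs measure $\nu=\mu^\eta$, which you should cite explicitly.
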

Proposition \ref{Prop:extrequi} is known in the literature; see, for example, \cite{Miy74}. We include a proof for completeness.
\begin{proof}
	Equations \eqref{eq:DLRalt}, \eqref{eq:backmart}, \eqref{eq:muetadef} imply that for $\mu$-almost all $\eta$,
	\[\mu^{\eta}(A)=\mu(A | \mathcal{F}_{\infty})(\eta), \ \forall \text{ local event }A.\]
	Since local events $A$ generate the $\sigma$-algebra $\mathcal{F}$, the above equality holds for all events in $\mathcal{F}$. In particular, for $\mu$-almost all $\eta$,
	\[\mu^{\eta}(B)=\mu(B | \mathcal{F}_{\infty})(\eta)=\textbf{1}_{B}(\eta)\in\{0,1\}, \ \forall B\in \mathcal{F}_{\infty}.\]
	So $\mu^{\eta}$ is trivial on $\mathcal{F}_{\infty}$, and thus $\mu^{\eta}\in \text{ex}\mathcal{G}_{\beta}$ by Theorem 6.58 of \cite{FV18} or Theorem 7.7 of~\cite{Geo11}. The integral representation \eqref{eq:extremaldecom1} follows from \eqref{eq:DLR} and \eqref{eq:muetadef}. This completes the proof of the proposition since part (ii) follows directly from part (i).
\end{proof}

For each $\eta\in \Omega$, $x\in V(\mathbb{T}^d)$, $n\geq d(x,o)$, we define the \textit{likelihood ratios}
\begin{equation}
	r_{n,x}^{(i)}(\eta):=\frac{\tilde{\mu}^{\eta}_{\mathbb{T}^d_x \cap V_n,\beta}(\sigma_x=i)}{\tilde{\mu}^{\eta}_{\mathbb{T}^d_x \cap V_n,\beta}(\sigma_x=1)}, \ \forall i\in \Omega_0,
\end{equation}
where $\tilde{\mu}^{\eta}_{\mathbb{T}^d_x \cap V_n,\beta}$ is the same as $\mu^{\eta}_{\mathbb{T}^d_x \cap V_n,\beta}$ defined in \eqref{eq:Pottsdef} except that we require the boundary conditions $\eta$ only act on $\mathbb{T}_x^d \cap W_{n+1}$ (that is, we assume the parent of $x$ (if it exists) is not a boundary point in this measure).

\begin{proposition}\label{Prop:r}
	Let $\mu\in \mathscr{G}_{\beta}$. Then
	\begin{enumerate}[(i)]
		\item $\mu \in \textnormal{ex}\mathscr{G}_{\beta}$ if and only if there exists $(r_x^{(i)})_{ x \in \mathbb{T}^d, i \in \Omega_0}$ such that for $\mu$-almost all $\eta$,
		\[\lim_{n\rightarrow \infty} r_{n,x}^{(i)}(\eta)= r_x^{(i)}, \forall x\in \mathbb{T}^d, i\in \Omega_0.\]
		\item If $\eta$ and $\tilde{\eta}$ are sampled from $\mu$ such that $\mu^{\eta}$ and $\mu^{\tilde{\eta}}$ (defined by \eqref{eq:muetadef}) are well-defined and in $\textnormal{ex}\mathscr{G}_{\beta}$, then $\mu^{\eta}=\mu^{\tilde{\eta}}$ if and only if $\vec{r}_x(\eta)=\vec{r}_x(\tilde{\eta})$ for each $x \in \mathbb{T}^d$ with $d(x,o) \geq N_0$ for some $N_0 \in \mathbb{N}\cup\{0\}$ where $\vec{r}_x(\eta):=(r_x^{(1)},\dots, r_x^{(q)})$ with $r_x^{(i)}$ defined as in part (i) (with the $\mu$ in part (i)  $=\mu^{\eta}$).
	\end{enumerate}
\end{proposition}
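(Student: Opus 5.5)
The plan is to reduce everything to the recursive structure of finite-volume measures on a tree. The basic fact is that, for $\Lambda=V_n$ and boundary condition $\eta$, the marginal of $\mu^{\eta}_{V_n,\beta}$ on $V_m$ (with $m<n$) is a Markov chain on $V_m$ whose boundary weights are given by the vectors $\vec r_{n,x}(\eta)$, $x\in W_{m+1}$. Explicitly,
\[
\mu^{\eta}_{V_n,\beta}(\sigma_{V_m}=s)\propto \exp\Big[\beta\sum_{u,v\in V_m,u\sim v}\delta_{s_u,s_v}\Big]\prod_{x\in W_{m+1}}\sum_{i\in\Omega_0} e^{\beta\delta_{s_{p(x)},i}}\,r^{(i)}_{n,x}(\eta),
\]
where $p(x)$ denotes the parent of $x$. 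This identity follows by summing out the subtrees $\mathbb{T}^d_x\cap V_n$, $x\in W_{m+1}$, which interact with $V_m$ only through the edge $p(x)x$. It shows that the law on $V_m$ is a continuous function of $(\vec r_{n,x})_{x\in W_{m+1}}$, provided these vectors stay in a compact subset of $(0,\infty)^q$. Compactness holds uniformly because each $r^{(i)}_{n,x}\in[e^{-\beta(d)},e^{\beta(d)}]$, as changing $\sigma_x$ changes the energy by at most $\beta d$.

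For part (i), direction ($\Leftarrow$): if $r_{n,x}(\eta)\to r_x$ for $\mu$-a.e.\ $\eta$ with a deterministic limit, the displayed formula shows that $\mu^{\eta}_{V_n,\beta}$ converges on every local event to a limit that does not depend on $\eta$. Combined with \eqref{eq:extremaldecom1}, this limit must equal $\mu$. Proposition \ref{Prop:extrequi}(ii) then gives extremality.

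Direction ($\Rightarrow$) of part (i): if $\mu$ is extremal, then $\mu^{\eta}_{V_n,\beta}\to\mu$ a.s. by Proposition \ref{Prop:extrequi}(ii). I would express $r_{n,x}$ through conditional probabilities of $\mu^{\eta}_{V_n,\beta}$. Conditioning on the spins outside $\mathbb{T}^d_x$, in particular on $\sigma_{p(x)}=j$, the law of $\sigma_x$ is proportional to $e^{\beta\delta_{i,j}}r^{(i)}_{n,x}$. Fix a $j$. The ratio $\mu^{\eta}_{V_n}(\sigma_x=i,\sigma_{p(x)}=j)/\mu^{\eta}_{V_n}(\sigma_x=1,\sigma_{p(x)}=j)$ equals $e^{\beta(\delta_{ij}-\delta_{1j})}r^{(i)}_{n,x}$, and it converges to the corresponding ratio under $\mu$. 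The denominator is positive, since $\mu$ gives positive mass to cylinder events, by finite energy. So $r^{(i)}_{n,x}$ converges to a deterministic limit, and we can define $r_x$ this way. For $x=o$, which has no parent, the same idea applies directly with the single-site marginal, using the ratio $\mu(\sigma_o=i)/\mu(\sigma_o=1)$ divided by the known contribution of nothing else. Alternatively one can read $r_o$ off from the children via the recursion $r^{(i)}_{n,x}=\prod_{y\text{ child of }x}\frac{\sum_k e^{\beta\delta_{ik}}r^{(k)}_{n,y}}{\sum_k e^{\beta\delta_{1k}}r^{(k)}_{n,y}}$.

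For part (ii), note that $\mu^{\eta}$ is itself extremal, and that for $\mu$-a.e.\ $\eta$ the measures $\mu^{\eta}_{V_n,\beta}$ converge to $\mu^{\eta}$. By the argument above applied to this single realization, the limits $\vec r_x(\eta)=\lim_n \vec r_{n,x}(\eta)$ exist and are determined by $\mu^{\eta}$ through the two-point conditional ratios. Hence $\mu^{\eta}=\mu^{\tilde\eta}$ forces $\vec r_x(\eta)=\vec r_x(\tilde\eta)$ for all $x$, so $N_0=0$ works. Conversely, suppose $\vec r_x(\eta)=\vec r_x(\tilde\eta)$ for all $x$ with $d(x,o)\ge N_0$. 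For $m\ge N_0-1$, letting $n\to\infty$ in the displayed formula shows that the marginals of $\mu^{\eta}$ and $\mu^{\tilde\eta}$ on $V_m$ coincide. Since these $V_m$ exhaust the tree, the two measures are equal.

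The main obstacle is making the "conditional ratio" identification rigorous. The point is to ensure that the deterministic quantity $\mu(\sigma_x=i,\sigma_{p(x)}=j)$ is strictly positive and that the finite-volume conditional structure really produces the factor $e^{\beta\delta_{ij}}r^{(i)}_{n,x}$. I would handle this carefully via the Markov property on trees, together with the uniform bounds on $r$.
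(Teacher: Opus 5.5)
Your proposal is correct and follows essentially the paper's route. For the forward direction of (i), you read off $r_{n,x}^{(i)}$ from the convergence of two-point probabilities of $\mu^{\eta}_{V_n,\beta}$ to those of $\mu$. For the backward direction and for part (ii), you use the explicit formula for the $V_m$-marginal in terms of the boundary likelihood ratios, together with \eqref{eq:extremaldecom1} and Proposition~\ref{Prop:extrequi}(ii).

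The differences from the paper are cosmetic: you sum out subtrees rooted at $W_{m+1}$ rather than writing weights $r_{n,x}^{(s_x)}$ for $x\in W_m$, and you use a general parent spin $j$ rather than $1$. Your explicit treatment of the root, and your identification of $\lim_n \vec r_{n,x}(\eta)$ with the deterministic $\vec r_x$ of $\mu^{\eta}$, fill in points the paper leaves as obvious. One small correction: for $x=o$ in $\mathbb{T}^d$ the a priori bound is $e^{\pm\beta(d+1)}$, not $e^{\pm\beta d}$.
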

Part (i) of Proposition \ref{Prop:r} for the $q=2$ case was Lemma 3 of \cite{Hig77}.
\begin{proof}
	Suppose $\mu \in \textnormal{ex}\mathscr{G}_{\beta}$. Then Proposition \ref{Prop:extrequi} implies that for each $xy\in E(\mathbb{T}^d)$ with $d(y,o)=d(x,o)+1$, we have that for $\mu$-almost all $\eta$,
	\[\lim_{n\rightarrow\infty} \mu_{V_n,\beta}^{\eta}(\sigma_y=i | \sigma_x=1)=\mu(\sigma_y=i | \sigma_x=1), \ \forall i\in \Omega_0.\]
	A direct computation gives
	\[\mu_{V_n,\beta}^{\eta}(\sigma_y=i | \sigma_x=1)=\frac{\exp[\beta \delta_{i,1}] r_{n,y}^{(i)}(\eta)}{\sum_{j=1}^q \exp[\beta \delta_{j,1}] r_{n,y}^{(j)}(\eta)}, \ \forall i \in \Omega_0, n > d(y,o).\]
	The last two displayed equations imply that for $\mu$-almost all $\eta$, the limit $\lim_{n\rightarrow \infty} r_{n,y}^{(i)}(\eta)$ exists for each $y\in V(\mathbb{T}^d)\setminus \{o\}$, $i \in \Omega_0$ and is independent of $\eta$. The existence of the limit $\lim_{n\rightarrow \infty} r_{n,o}^{(i)}(\eta)$ is obvious. 
	
	For each $m<n \in \mathbb{N}$,  each $s\in \Omega_0^{V_m}$, each $\eta \in \Omega$, we have
	\[\mu_{V_n,\beta}^{\eta}(\sigma_u=s_u, \forall u\in V_m)=\frac{\exp[-H_{V_{m-1}}^s(s)]\prod_{x\in W_m} r_{n,x}^{(s_x)}(\eta)}{\sum_{\tau \in \Omega_0^{V_m}}\exp[-H_{V_{m-1}}^{\tau}(\tau)]\prod_{x\in W_m} r_{n,x}^{(\tau_x)}(\eta)},\]
	where 
	\[-H_{V_{m-1}}^{\tau}(\tau):=\beta \sum_{u,v \in V_m: u \sim v }\delta_{\tau_u,\tau_v}.\]
	If we define (note that $r_{n,x}^{(i)}(\eta)$ is uniformly bounded below by $\exp[-(d+1)\beta]$)
	\[h_{n,x}^{(i)}(\eta):=\ln r_{n,x}^{(i)}(\eta), \ \forall x\in V(\mathbb{T}^d), i \in \Omega_0,\]
	then we obtain
	\[\mu_{V_n,\beta}^{\eta}(\sigma_u=s_u, \forall u\in V_m)=\frac{\exp[\beta \sum_{uv\in E(V_m)}\delta_{s_u,s_v}+\sum_{x\in W_m}h_{n,x}^{(s_x)}(\eta)]}{\sum_{\tau \in \Omega_0^{V_m}}\exp[\beta\sum_{uv\in E(V_m)}\delta_{\tau_u,\tau_v}+\sum_{x\in W_m}h_{n,x}^{(\tau_x)}(\eta)]}.\]
	Under the assumption that $\lim_{n\rightarrow \infty}r_{n,x}^{(i)}(\eta)=r_x^{(i)}$ for all $x\in \mathbb{T}^d$ and all $i\in \Omega_0$, we can define that for $\mu$-almost all $\eta$,
	\[h_x^{(i)}:=\ln r_x^{(i)}=\lim_{n\rightarrow \infty} h_{n,x}^{(i)}(\eta), \ \forall x\in \mathbb{T}^d, i \in \Omega_0.\] 
	The last two displayed equations imply that for $\mu$-almost all $\eta$,
	\begin{equation}\label{eq:findis}
		\lim_{n\rightarrow\infty} \mu_{V_n,\beta}^{\eta}(\sigma_u=s_u, \forall u\in V_m) \propto \exp\left[\beta \sum_{uv\in E(V_m)} \delta_{s_u,s_v}+\sum_{x\in W_m} h_x^{(s_x)}\right], \ \forall m\in \mathbb{N}, s\in \Omega_0^{V_m}.
	\end{equation}
	So we conclude that the $\mu^{\eta}$ defined in \eqref{eq:muetadef} is the same measure (say $\nu$) for $\mu$-almost all $\eta$. Combining with \eqref{eq:extremaldecom1}, we get $\nu=\mu$. Part (ii) of Proposition \ref{Prop:extrequi} then implies that $\mu \in \text{ex}\mathscr{G}_{\beta}$. This completes the proof of part (i).
	
	For the proof of part (ii), the forward direction follows trivially from part (i). For the backward direction, note that \eqref{eq:findis} implies that
	\[\mu^{\eta}(\sigma_u=s_u, \forall u\in V_m)\propto \exp\left[\beta \sum_{uv\in E(V_m)} \delta_{s_u,s_v}+\sum_{x\in W_m} h_x^{(s_x)}\right], \ m \geq N_0, s\in \Omega_0^{V_m}.\]
	This means that $(\vec{r}_x(\eta)$  with $x\in \mathbb{T}^d$ and $d(x,o)\geq N_0)$ uniquely determines $\mu^{\eta}$ and thus completes the proof.
\end{proof}

\section{Proof of the main result}
We first prove a zero-one law for the order parameter defined in \eqref{eq:Q}.
\begin{proposition}\label{Prop:01law}
	For each $\beta\geq 0$, we have
	\[\mu_{\beta}^{\varnothing} \otimes \mu_{\beta}^{\varnothing}\left(\{(\eta^{(1)},\eta^{(2)}): \mu^{\eta^{(1)}}=\mu^{\eta^{(2)}}\}\right)\in \{0,1\},\]
	where $\mu_{\beta}^{\varnothing} \otimes \mu_{\beta}^{\varnothing}$ is the product measure.
\end{proposition}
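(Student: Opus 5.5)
We will study conditional versions of the overlap $Q$ in \eqref{eq:Q}, organized along the tree structure. For each vertex $x$ and each pair of spins $i,j\in\Omega_0$, consider two independent samples $\eta^{(1)},\eta^{(2)}$ from the free state $\mu_\beta^{\varnothing}$ on the subtree $\mathbb{T}^d_x$. Here the free state restricted to $\mathbb{T}^d_x$ is the broadcasting (Markov chain) process started at $x$. We condition on $\eta^{(1)}_x=i$ and $\eta^{(2)}_x=j$, and define
\[
Q_{ij}:=P\bigl(\vec r_x(\eta^{(1)})=\vec r_x(\eta^{(2)})\bigr),
\]
where $\vec r_x$ is the limiting likelihood-ratio vector on the subtree. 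By homogeneity and Potts symmetry, $Q_{ij}$ does not depend on $x$, and it takes only two values: $a:=Q_{ii}$ and $b:=Q_{ij}$ for $i\ne j$. By Proposition \ref{Prop:r}(ii), $\mu^{\eta^{(1)}}=\mu^{\eta^{(2)}}$ holds if and only if $\vec r_x$ agree for all $x$ beyond some level $N_0$. Since each level is finite, $Q$ can be expressed through the events $E_x:=\{\vec r_x(\eta^{(1)})=\vec r_x(\eta^{(2)})\}$. We also need to check that the limits $\vec r_x(\eta)$ exist a.s. on each subtree. This follows from Proposition \ref{Prop:extrequi}, applied to the conditional law given $\eta$ outside the subtree.

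The key structural input is the recursion coming from the tree. The ratio $\vec r_x$ is a deterministic injective-enough function of the children's vectors $(\vec r_y)_{y \text{ child of } x}$ (the standard BP recursion $r_x^{(i)}=\prod_y \frac{\sum_k e^{\beta\delta_{ik}}r_y^{(k)}}{\sum_k e^{\beta\delta_{1k}}r_y^{(k)}}$). Hence $\bigcap_y E_y\subset E_x$, so the events $E_x$ are monotone up the tree.

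The main step is to show that $E_x$ is, up to null sets, equal to $\bigcap_{y\text{ child}}E_y$. This is the most delicate point. The plan is to view $E_x$ as a tail-type event of the pair of subtree configurations, so that conditionally on the children's spins, the $d$ subtrees are independent. We would then use the Edwards--Sokal coupling. In the random-cluster representation on a tree, each edge is open independently with probability $p=1-e^{-\beta}$. Conditionally on the edge $xy$ being open, the child spin equals the parent spin; otherwise the child spin is uniform. This yields linear relations among the conditional probabilities, giving a system of the form
\[
a=\Bigl(p+(1-p)\bigl(\tfrac1q a+\tfrac{q-1}{q}b\bigr)\Bigr)^d\text{-type expressions},\qquad b=\cdots .
\]
The same coupling applied to a \emph{single} level shows that the overlap is a tail event for the pair process on the tree. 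If necessary, we will use a Kolmogorov-type / Lévy 0-1 argument: the conditional probability of $\{\mu^{\eta^{(1)}}=\mu^{\eta^{(2)}}\}$ given $(\eta^{(1)},\eta^{(2)})$ on $V_n$ converges a.s. to the indicator. At the same time, by the Markov structure it equals a product over $x\in W_n$ of $Q_{\eta^{(1)}_x\eta^{(2)}_x}$. The indicator target forces such products to tend to $0$ or $1$.

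To solve the equations, note that for a fixed level $n$, the overlap probability equals $E\prod_{x\in W_n}Q_{\eta^{(1)}_x\eta^{(2)}_x}$, and $|W_n|\to\infty$. By the ergodicity of the pair chain on the edge (the transition matrix is strictly positive when $\beta<\infty$), a positive fraction of the sites $x\in W_n$ have $\eta^{(1)}_x=\eta^{(2)}_x$, and likewise with $\ne$. If $\min(a,b)<1$, the product then tends to $0$, which gives $Q=0$. If $a=b=1$, then $Q=1$. The remaining case $\beta=\infty$ is trivial. The hardest step is justifying the product formula, i.e. the conditional independence of the events $E_x$, $x\in W_n$, given the level-$n$ spins of both replicas, together with the claim that the global event equals $\bigcap_{x\in W_n}E_x$ eventually. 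I would first try to prove this via Proposition \ref{Prop:r}(ii) combined with the monotonicity $\bigcap_y E_y\subset E_x$.
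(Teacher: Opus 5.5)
\textbf{The gap.} Your closing argument (write the overlap as an expectation of a product over $W_n$, then use strictly positive transitions to force many factors below $1$) is viable, but as written you feed it the wrong quantity. You define $Q_{ij}$ and $E_x$ through agreement of $\vec r_x$ at the \emph{single} vertex $x$. Proposition~\ref{Prop:r}(ii) together with the monotonicity $\bigcap_y E_y\subset E_x$ only gives
\[
\{\mu^{\eta^{(1)}}=\mu^{\eta^{(2)}}\}=\bigcap_{z}E_z=\bigcap_{n}\bigcap_{x\in W_n}E_x ,
\]
which is a decreasing intersection in $n$. Monotonicity runs from children to parent, so it cannot yield $\{\mu^{\eta^{(1)}}=\mu^{\eta^{(2)}}\}=\bigcap_{x\in W_n}E_x$ for a fixed $n$. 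Your planned route to the product formula therefore does not deliver the fixed-$n$ identity you state. That identity would need your ``main step'' $E_x=\bigcap_y E_y$ a.s. This is essentially a non-atomicity statement for the law of $\vec r_x$ in the non-extremal regime, and you give no argument for it.

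There are also two minor slips. On a tree, the Edwards--Sokal edge variables are Bernoulli with $\hat p=(1-e^{-\beta})/(1-e^{-\beta}+qe^{-\beta})$, not $1-e^{-\beta}$. In your sketched system for $a,b$, the two replicas carry independent edges, so the weights should be $p_{ik}p_{jl}$. Neither slip matters once the argument is repaired.

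\textbf{The fix.} Condition on agreement of $\vec r_z$ for \emph{all} $z$ in the subtree, i.e.\ use $c_{ij}$ from \eqref{eq:c_ijdef}. Then Proposition~\ref{Prop:r}(ii), monotonicity, and the Markov/shift properties of Lemma~\ref{lem:3prop} (iterated) give, for every $n\ge 1$,
\[
Q=\mathbb{E}\prod_{x\in W_n}c_{\eta^{(1)}_x\eta^{(2)}_x}.
\]
Alternatively, keep your single-vertex events and use $Q=\lim_n\mathbb{E}\prod_{x\in W_n}Q_{\eta^{(1)}_x\eta^{(2)}_x}$; the rest works with the limit.

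For the counting step, ``ergodicity'' is vaguer than needed. Conditionally on both replicas on $V_{n-1}$, the pairs $(\eta^{(1)}_x,\eta^{(2)}_x)$, $x\in W_n$, are independent. Each equals a given $(k,l)$ with probability at least $\delta:=\min_{i,j,k,l}p_{ik}p_{jl}>0$. Hence if some $c_{kl}=\gamma<1$, then $Q\le(1-\delta(1-\gamma))^{|W_n|}\to 0$; if all $c_{kl}=1$, then $Q=1$. The symmetry reduction to $(a,b)$, the recursion for $a,b$, and the L\'evy 0--1 detour are all unnecessary.

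\textbf{Comparison with the paper.} The paper stops at level one. It derives $c_{ij}=\bigl[\sum_{k,l}p_{ik}p_{jl}c_{kl}\bigr]^d$ and solves it by a maximum principle: $M\le M^d$, and strict positivity of the $p_{ik}$ forces $c_{ij}\equiv M\in\{0,1\}$ and $Q=M^{d+1}$. Your repaired route is in effect the $n$-fold iterate of that recursion, with a binomial-domination bound over the growing level $W_n$ replacing the maximum principle. It avoids solving any fixed-point system, at the cost of the level-$n$ factorization and a probabilistic estimate. Both arguments use strict positivity of the transitions essentially. Both also recover that all $c_{kl}$ are equal, since every level-one configuration has positive probability.
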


Let $\tilde{\mathbb{T}}^d$ be the infinite $d$-ary tree, i.e., an infinite tree with a root $o$ having degree $d$ and all other vertices having degree $d+1$. Note that Propositions \ref{Prop:extrequi} and \ref{Prop:r} hold for Gibbs measures defined on $\tilde{\mathbb{T}}^d$ with the same proofs. We may label the root of $\tilde{\mathbb{T}}^d$ as $\emptyset$; label its $d$ children using $\{1,2,\dots,d\}$. Every vertex at depth $n$ is uniquely represented as a word of length $n$ (like $u=a_1a_2\dots a_n$ with $a_i\in \{1,2,\dots, d\}$). For $x\in V(\tilde{\mathbb{T}}^d)$, let $\tilde{\mathbb{T}}_{x}^d$ be the subtree rooted at $x$. Define $T: \tilde{\mathbb{T}}^d \rightarrow \tilde{\mathbb{T}}_{x_1}^d$ with $T(u)=1\cdot u$ by prepending the letter $1$ to the vertex $u$. Then $T$ is clearly a graph isomorphism between $\tilde{\mathbb{T}}^d$ and $\tilde{\mathbb{T}}_{x_1}^d$ (i.e., a bijection between the vertex sets that preserves the neighboring relationships); see Figure~\ref{fig:shift}. Similarly, for $x\in W_1$, let $S: \tilde{\mathbb{T}}^d \rightarrow \mathbb{T}_x^d$ be a graph isomorphism defined as in Figure \ref{fig:shift}. Let $\mu_{\tilde{\mathbb{T}}^d}^{\varnothing}$ be the free state defined on $\tilde{\mathbb{T}}^d$ at the fixed inverse temperature $\beta\geq0$. 

\begin{figure}
	\begin{center}
		\includegraphics[width=\textwidth]{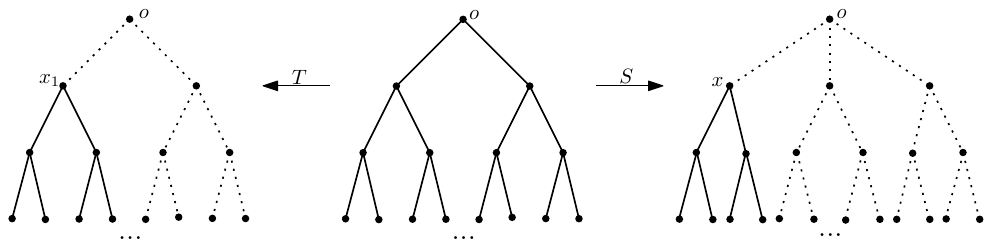}
		\caption{The graph isomorphism $T: \tilde{\mathbb{T}}^2 \rightarrow \tilde{\mathbb{T}}_{x_1}^2$ (where $\tilde{\mathbb{T}}_{x_1}^2$ is a subtree of $\tilde{\mathbb{T}}^2$ and is represented by solid edges on the left) and $S: \tilde{\mathbb{T}}^2 \rightarrow \mathbb{T}_x^2$ (where $\mathbb{T}_x^2$ is a subtree of $\mathbb{T}^2$ and is represented by solid edges on the right).}\label{fig:shift}
	\end{center}
\end{figure}

\begin{lemma}\label{lem:3prop}
		\begin{enumerate}[(1)]
		\item Under the conditional measure $\mu_{\tilde{\mathbb{T}}^d}^i:=\mu_{\tilde{\mathbb{T}}^d}^{\varnothing}(\cdot \mid \eta_o=i)$, $\left. \eta \right |_{\tilde{\mathbb{T}}_x^d}$  for all $x\in V(\tilde{\mathbb{T}}^d)$ with $d(x,o)=1$ are independent and identically distributed.
		\item For $x_1\in V(\tilde{\mathbb{T}}^d)$ with the label $1$, $\mu_{\tilde{\mathbb{T}}^d}^i(\cdot \mid \eta_{x_1}=j)$ restricted to $\tilde{\mathbb{T}}_{x_1}^d$ and $\mu_{\tilde{\mathbb{T}}^d}^j$ have the same distribution up to the shift $T$:
		\[\mu_{\tilde{\mathbb{T}}^d}^i(T(E) \mid \eta_{x_1}=j) = \mu_{\tilde{\mathbb{T}}^d}^j(E), \ \forall E \in \mathcal{F}_{V(\tilde{\mathbb{T}}^d)},\]
		where 
		\[T(E):=\{\sigma: \sigma_{Tu}=\eta_u, \forall u \in V(\tilde{\mathbb{T}}^d), \eta \in E\}.\]
		\item For each $x\in W_1$ and $s\in \Omega_0^{V_1}$, 
		\begin{equation}\label{eq:shift}
			\mu_{\beta}^{\varnothing}(S(E) \mid \eta_{V_1}=s) = \mu_{\tilde{\mathbb{T}}^d}^{\varnothing}(E \mid \eta_0=s_x),\ \forall E \in \mathcal{F}_{V(\tilde{\mathbb{T}}^d)}.
		\end{equation}
	\end{enumerate}
\end{lemma}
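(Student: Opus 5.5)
The plan is to reduce all three statements to explicit formulas for finite-volume marginals and then pass to the limit through \eqref{eq:Kol}. The key input is that on a finite tree with free boundary conditions the Potts measure is a Markov chain along the edges, i.e. a broadcasting process. Concretely, for the finite subtree $\tilde V_n:=V_n\cap V(\tilde{\mathbb{T}}^d)$ of $\tilde{\mathbb{T}}^d$ and any configuration $\sigma\in\Omega_0^{\tilde V_n}$, we have
\[\mu^{\varnothing}_{\tilde V_n,\beta}(\sigma)\propto \prod_{uv\in E(\tilde V_n)} e^{\beta\delta_{\sigma_u,\sigma_v}}.\]
Summing over the leaves, each leaf factor $\sum_{j}e^{\beta\delta_{\sigma_u,j}}=e^{\beta}+q-1$ is a constant independent of $\sigma_u$. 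By induction over levels this gives two facts. First, the free marginal on $\tilde V_m$ computed from $\tilde V_n$ ($n>m$) equals $\mu^{\varnothing}_{\tilde V_m,\beta}$, which is consistency, exactly as in \eqref{eq:Kol}. Second, the root marginal is uniform, and each vertex is updated from its parent by the kernel $P(i,j)=e^{\beta\delta_{i,j}}/(e^\beta+q-1)$, independently across edges.

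For (1), condition on $\eta_o=i$. The product weight factorizes into a product over $x\in W_1$ of the factor $e^{\beta\delta_{i,\sigma_x}}$ times the weights of the edges inside $\tilde{\mathbb{T}}^d_x\cap \tilde V_n$. This gives independence of the restrictions to the subtrees at each finite level $n$. Each factor has the same form under the canonical isomorphism between the subtrees $\tilde{\mathbb{T}}^d_x$ (relabel the first letter). So the restrictions are identically distributed, again at each finite level. Since local events in the subtrees are determined at finite $n$ and the finite-$n$ laws coincide with the infinite-volume marginals by \eqref{eq:Kol}, independence and equality in law extend to the product $\sigma$-algebras by a $\pi$-$\lambda$ argument.

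For (2), I will use the same factorization. Conditioned on $\eta_o=i$ and $\eta_{x_1}=j$, the law on $\tilde{\mathbb{T}}^d_{x_1}\cap\tilde V_n$ is proportional to the product of the edge weights inside that subtree. The factor $e^{\beta\delta_{i,j}}$ is then constant and drops out, and the other subtrees integrate out. Under $T^{-1}$ this is precisely the law of $\mu^{\varnothing}_{\tilde V_{n-1},\beta}(\cdot\mid\sigma_o=j)$, because the root of $\tilde{\mathbb{T}}^d$ has $d$ children, just as $x_1$ does in its subtree. Checking this on cylinder events and extending by the $\pi$-$\lambda$ theorem gives the displayed identity.

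For (3), I will do the same computation on $\mathbb{T}^d$ with $\Lambda=V_n$. Conditioning on $\eta_{V_1}=s$ splits the product weight over the $d+1$ subtrees $\mathbb{T}^d_x$, $x\in W_1$, and the factors on $E(V_1)$ become constants. On $\mathbb{T}^d_x\cap V_n$ the conditional law is the free measure pinned at $\sigma_x=s_x$, and $\mathbb{T}^d_x$ is a $d$-ary tree rooted at $x$. Through $S$ it is therefore $\mu^{\varnothing}_{\tilde V_{n-1},\beta}(\cdot\mid\sigma_o=s_x)$. The only point needing care is that conditioning on the whole of $V_1$ (rather than on $\eta_x$ alone) introduces no extra dependence on $s_y$ for $y\neq x$. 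This holds because the weights factorize and those spins sit outside $\mathbb{T}^d_x$ except via the single edge $ox$.

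The main obstacle is mostly bookkeeping: making the finite-volume level-matching precise under $T$ and $S$ (level $n$ corresponds to level $n-1$) and justifying the passage from cylinder events to all of $\mathcal{F}$. The consistency observation in the first paragraph is what makes that passage immediate.
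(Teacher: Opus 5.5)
Your proposal is correct, but it takes a different route from the paper's proof of this lemma. The paper uses the Edwards--Sokal coupling. On a tree the random-cluster measure is Bernoulli bond percolation with parameter $\hat p=(1-e^{-\beta})/(1-e^{-\beta}+qe^{-\beta})$. So a sample of $\mu^{\varnothing}_{\tilde{\mathbb{T}}^d}$ is obtained by percolating and then giving each open cluster an independent uniform color. Property (1) then follows at once from the independence of edges and cluster colors. Properties (2) and (3) follow because, once the spins at $o$ and $x_1$ (or on $V_1$) are fixed, the configuration on the subtree is produced by percolation inside the subtree, with the root's cluster carrying the prescribed color.

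You instead work directly with the product weights $\prod_{uv}e^{\beta\delta_{\sigma_u,\sigma_v}}$ on finite balls. You use the constancy of the leaf sums to get consistency, factorize after pinning the root (or $V_1$), and pass from cylinder events to all of $\mathcal{F}$ via \eqref{eq:Kol} and a $\pi$--$\lambda$ argument.

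What each route buys:
\begin{itemize}
\item The coupling argument is shorter and more conceptual. However, it relies on the infinite-volume Edwards--Sokal coupling, which the paper imports from Grimmett, and it is tied to the zero-field free state.
\item Your computation is self-contained and makes explicit the extension from cylinder events to the full $\sigma$-algebra, which the paper leaves implicit.
\item Your route is essentially the argument the paper itself uses later for Lemma~\ref{lem:3propfield}, where the simple Bernoulli-percolation picture is lost. There the consistency of the finite-dimensional marginals comes from the defining formulas \eqref{eq:CHSGMdef} and \eqref{eq:mutilde} rather than from constant leaf sums, but the same factorization goes through.
\end{itemize}
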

All three properties hold for more general Gibbs measures that we will discuss in Section~\ref{sec:gen}. Here we give an alternative and straightforward proof using the Edwards-Sokal coupling~\cite{ES88}.
\begin{proof}
	Usually, the Edwards-Sokal coupling between the random-cluster model and Potts model is defined on finite graphs, but it can be generalized to infinite graphs (see, e.g., Theorem 4.91 of \cite{Gri06}). Since $\tilde{\mathbb{T}}^d$ has no loops, the random-cluster model on $\tilde{\mathbb{T}}^d$ with edge open probability $1-e^{-\beta}$ is the same as the Bernoulli percolation on $\tilde{\mathbb{T}}^d$ with $\hat{p}=(1-e^{-\beta})/(1-e^{-\beta}+qe^{-\beta})$. To obtain a configuration $\eta$ sampled from $\mu^{\varnothing}_{\tilde{\mathbb{T}}^d}$, we first sample $\omega$ according to the Bernoulli $\hat{p}$-percolation on $E(\tilde{\mathbb{T}}^d)$, then assign a spin uniformly in $\Omega_0$ to each open cluster, and the assignments among different open clusters are independent. Since both bond percolation and spin assignments (among open clusters) are independent, this immediately implies property (1) above.
	
	For property (2), we first note that
	\[\mu_{\tilde{\mathbb{T}}^d}^i(\cdot \mid \eta_{x_1}=j) = \mu_{\tilde{\mathbb{T}}^d}^{\varnothing}(\cdot \mid \eta_{x_1}=j, \eta_0=i). \]
	Let $\eta \in \Omega_0^{\tilde{\mathbb{T}}_{x_1}^d}$ be the restriction of a configuration sampled from $\mu_{\tilde{\mathbb{T}}^d}^{\varnothing}(\cdot \mid \eta_{x_1}=j, \eta_0=i)$ to $\tilde{\mathbb{T}}^d_{x_1}$. We may obtain $\eta$ by first sampling Bernoulli $\hat{p}$-percolation on $E(\tilde{\mathbb{T}}^d_{x_1})$ and then assigning $j$ to the open cluster of $x_1$ and a spin uniformly in $\Omega_0$ to the remaining open clusters.  It is now clear that $T^{-1}\eta$ is distributed as a configuration from $\mu_{\tilde{\mathbb{T}}^d}^j=\mu_{\tilde{\mathbb{T}}^d}^{\varnothing}(\cdot \mid \eta_o=j)$. The proof of property (3) is similar. 
\end{proof}

Define
\begin{equation}\label{eq:c_ijdef}
	\begin{split}
		&c_{ij}:=\mu_{\tilde{\mathbb{T}}^d}^{\varnothing}\otimes \mu_{\tilde{\mathbb{T}}^d}^{\varnothing} \left(\{(\eta^{(1)},\eta^{(2)}): \mu^{\eta^{(1)}}=\mu^{\eta^{(2)}}\} \middle| \eta^{(1)}_o=i, \eta^{(2)}_o=j\right), \ \forall i, j\in \Omega_0,\\
		& p_{ij}:=\mu_{\tilde{\mathbb{T}}^d}^{\varnothing}(\eta_{x_1}=j \mid \eta_{o}=i), \ \forall x_1 \sim o, i,j \in \Omega_0.
	\end{split}
\end{equation}

We next derive a system of equations for $c_{ij}$.
\begin{lemma}\label{lem:eqns}
	For $c_{ij}$ and $p_{ij}$ defined in \eqref{eq:c_ijdef}, we have
	\begin{equation}\label{eq:sysine}
		c_{ij} = \left[\sum_{k=1}^q\sum_{l=1}^q p_{ik} p_{jl} c_{kl}\right]^d, \ \forall i, j \in \Omega_0.
	\end{equation}
\end{lemma}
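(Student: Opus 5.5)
The plan is to condition on the spins at the first level and then use the independence and self-similarity from Lemma \ref{lem:3prop}. The key point is that the event $\{\mu^{\eta^{(1)}}=\mu^{\eta^{(2)}}\}$ factorizes over the $d$ subtrees $\tilde{\mathbb{T}}^d_{x}$, $x\in W_1$.

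\textbf{Step 1: reduce to subtrees.} By Proposition \ref{Prop:r}(ii), applied on $\tilde{\mathbb{T}}^d$, the equality $\mu^{\eta^{(1)}}=\mu^{\eta^{(2)}}$ holds if and only if $\vec r_y(\eta^{(1)})=\vec r_y(\eta^{(2)})$ for all $y$ at depth at least some $N_0$. For $y\in \tilde{\mathbb{T}}^d_x$, the ratio $r_{n,y}^{(i)}(\eta)$ depends only on $\eta|_{\tilde{\mathbb{T}}^d_x}$.

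Let $\nu^{\eta}_x$ denote the limiting measure on the subtree $\tilde{\mathbb{T}}^d_x$, transported to $\tilde{\mathbb{T}}^d$ by the isomorphism $T$ (or its analogue for the other children). I would show that, almost surely,
\[
\{\mu^{\eta^{(1)}}=\mu^{\eta^{(2)}}\}=\bigcap_{x\in W_1}\{\nu^{\eta^{(1)}}_x=\nu^{\eta^{(2)}}_x\}.
\]
The inclusion $\supseteq$ follows because the ratios deep in every subtree agree eventually, and taking the maximum of the finitely many $N_0$'s works. The inclusion $\subseteq$ follows because agreement for all deep $y$ in $\tilde{\mathbb{T}}^d$ restricts to each subtree.

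The subtle point is that Proposition \ref{Prop:r}(ii) must be applied to the subtree measures. This requires knowing that the shifted subtree configuration is a typical sample of $\mu^{j}_{\tilde{\mathbb{T}}^d}$, so that its limit is almost surely well defined and extremal. Lemma \ref{lem:3prop}(2) gives exactly this. Moreover, the ratios $\vec r_y$ computed for the subtree measure coincide with those for the full-tree measure, since both are limits of the same finite-volume ratios. I expect this bookkeeping, namely making sure the "almost all $\eta$" sets match under conditioning, to be the main obstacle. It is handled by noting that the conditioning events $\{\eta_o=i\}$ and $\{\eta_{x}=k\}$ have positive probability.

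\textbf{Step 2: factorize.} Condition on $\eta^{(1)}_o=i$ and $\eta^{(2)}_o=j$, and then on the spins $\eta^{(1)}_x=k_x$ and $\eta^{(2)}_x=l_x$ for $x\in W_1$. Under $\mu^i_{\tilde{\mathbb{T}}^d}\otimes\mu^j_{\tilde{\mathbb{T}}^d}$, Lemma \ref{lem:3prop}(1) shows that the pairs $(\eta^{(1)}|_{\tilde{\mathbb{T}}^d_x},\eta^{(2)}|_{\tilde{\mathbb{T}}^d_x})$ are i.i.d. over $x\in W_1$. The two replicas are independent of each other by construction of the product measure. Hence the probability of the intersection is the product over $x$ of the probabilities of the single-subtree events.

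\textbf{Step 3: identify each factor.} Fix $x\in W_1$. Conditioning further on $(k_x,l_x)$, which occurs with probability $p_{ik}p_{jl}$, Lemma \ref{lem:3prop}(2) (via the shift $T$) identifies the conditional law of the pair of subtree configurations with $\mu^{k}_{\tilde{\mathbb{T}}^d}\otimes\mu^{l}_{\tilde{\mathbb{T}}^d}$. Therefore the factor for $x$ equals
\[
\sum_{k,l}p_{ik}p_{jl}\,c_{kl}.
\]
Raising this to the $d$-th power, one factor for each of the $d$ children, gives \eqref{eq:sysine}.
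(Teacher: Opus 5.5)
Your proposal is correct and follows essentially the same route as the paper: Proposition~\ref{Prop:r}(ii) localizes the event $\{\mu^{\eta^{(1)}}=\mu^{\eta^{(2)}}\}$ to agreement on each of the $d$ subtrees, Lemma~\ref{lem:3prop}(1) factorizes it into a $d$-th power, and total probability over the child spins together with the shift in Lemma~\ref{lem:3prop}(2) identifies each factor as $\sum_{k,l}p_{ik}p_{jl}c_{kl}$. The differences are cosmetic: you phrase the subtree event as equality of the transported limiting measures $\nu^{\eta}_x$, whereas the paper works directly with the ratio vectors $\vec r_x$ on $\tilde{\mathbb{T}}^d_{x_1}$, and your extra conditioning on all first-level spins in Step 2 is unnecessary, since the i.i.d.\ statement already holds under $\mu^i\otimes\mu^j$ alone.
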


\begin{proof}
	To simplify the notation, we write $\mu^{\varnothing}$ for $\mu_{\tilde{\mathbb{T}}^d}^{\varnothing}$ in the proof. Part (i) of Proposition~\ref{Prop:extrequi} and Part (ii) of Proposition \ref{Prop:r} imply that
	\[c_{ij}= \mu^{\varnothing}\otimes \mu^{\varnothing}(\vec{r}_x(\eta^{(1)})=\vec{r}_{x}(\eta^{(2)}), \forall x \in \tilde{\mathbb{T}}^d \setminus\{o\} \mid \eta^{(1)}_o=i, \eta^{(2)}_o=j).\]
	Then property (1) from Lemma \ref{lem:3prop} gives
	\[c_{ij}=\left[\mu^{\varnothing}\otimes \mu^{\varnothing}(\vec{r}_x(\eta^{(1)})=\vec{r}_{x}(\eta^{(2)}), \forall x \in \tilde{\mathbb{T}}_{x_1}^d \mid \eta^{(1)}_o=i, \eta^{(2)}_o=j)\right]^d.\]
	The law of total probability yields
	\begin{align*}
		c_{ij}&= \left[\sum_{k=1}^q \sum_{l=1}^q p_{ik} p_{jl} \mu^i \otimes \mu^j (\vec{r}_x(\eta^{(1)})=\vec{r}_{x}(\eta^{(2)}), \forall x \in \tilde{\mathbb{T}}_{x_1}^d \mid \eta^{(1)}_{x_1}=k, \eta^{(2)}_{x_1}=l)\right]^d\\
		&= \left[\sum_{k=1}^q \sum_{l=1}^q p_{ik} p_{jl} \mu^k\otimes \mu^l (\vec{r}_x(\eta^{(1)})=\vec{r}_{x}(\eta^{(2)}), \forall x \in \tilde{\mathbb{T}}^d )\right]^d,
	\end{align*}
	where we have used property (2) from Lemma \ref{lem:3prop} in the last equality. Finally, another application of Propositions~\ref{Prop:extrequi} and \ref{Prop:r} gives
	\[c_{ij}=\left[\sum_{k=1}^q \sum_{l=1}^q p_{ik} p_{jl} c_{kl}\right]^d.\]
	This is exactly the desired result.
\end{proof}

\begin{proof}[Proof of Proposition \ref{Prop:01law}]
	Note that
	\[\sum_{k=1}^q p_{ik}=1, \ \forall i\in \Omega_0 \text{ and }\ p_{i,k}\in (0,1), \ \forall i,k \in \Omega_0.\]
	Let 
	\[M:=\max_{i,j\in \Omega_0} c_{ij}.\]
	Suppose $M=c_{i_0j_0}$ for some $i_0,j_0\in \Omega_0$. Then equation \eqref{eq:sysine} gives that
	\[M=c_{i_0j_0}\leq \left[\sum_{k=1}^q \sum_{l=1}^q p_{i_0k}p_{j_0l} M\right]^d=M^d,\]
	which implies that $M\in \{0,1\}$ and the inequality must be an equality. Since $p_{i_0k}p_{j_0l}>0$ for all $k,l\in\Omega_0$, we obtain
	\[c_{ij}=M\in \{0,1\}, \ \forall i,j\in \Omega_0.\]
	
	Part (i) of Proposition \ref{Prop:extrequi} and Part (ii) of Proposition \ref{Prop:r} imply that
	\begin{align*}
		&\mu_{\beta}^{\varnothing} \otimes \mu_{\beta}^{\varnothing}( \mu^{\eta^{(1)}}=\mu^{\eta^{(2)}})\\
		= &\sum_{s\in \Omega_0^{V_1}}\sum_{t \in \Omega_0^{V_1}}\mu_{\beta}^{\varnothing} \otimes \mu_{\beta}^{\varnothing}(\vec{r}_x(\eta^{(1)})=\vec{r}_x(\eta^{(2)}), \forall x\in \mathbb{T}^d \setminus \{o\} \mid \eta^{(1)}_{V_1}=s, \eta^{(2)}_{V_1}=t) \\
		&\qquad \times \mu_{\beta}^{\varnothing} \otimes \mu_{\beta}^{\varnothing}(\eta^{(1)}_{V_1}=s, \eta^{(2)}_{V_1}=t)\\
		=&\sum_{s\in \Omega_0^{V_1}}\sum_{t \in \Omega_0^{V_1}} \prod_{u \sim o} c_{s_ut_u}\mu_{\beta}^{\varnothing} \otimes \mu_{\beta}^{\varnothing}(\eta^{(1)}_{V_1}=s, \eta^{(2)}_{V_1}=t)=M^{d+1} \in \{0,1\},
	\end{align*}
	where we have used property (3) from Lemma \ref{lem:3prop} in the last equality.
\end{proof}

We are ready to prove Theorem \ref{thm}.
\begin{proof}[Proof of Theorem \ref{thm}]
	We assume that $\mu_{\beta}^{\varnothing}$ is not extremal. Then \eqref{eq:extremaldecom} and \eqref{eq:extremaldecom1} imply that
	\[\mu_{\beta}^{\varnothing}(\cdot)=\int_{\Omega} \mu^{\eta}(\cdot) \mu_{\beta}^{\varnothing} (d\eta)=\int_{\text{ex}\mathscr{G}_{\beta}} \nu(\cdot) \lambda_{\mu^{\varnothing}_{\beta}}(d\nu).\]
	We further assume that $\lambda_{\mu^{\varnothing}_{\beta}}$ has an atom at $\nu_0 \in \text{ex}\mathscr{G}_{\beta}$: 
	\[\epsilon:=\lambda_{\mu^{\varnothing}_{\beta}}(\{\nu_0\})=\mu^{\varnothing}_{\beta}(\{\eta \in \Omega: \mu^{\eta}=\nu_0\})\in(0,1).\]
	Then we have
	\[\mu_{\beta}^{\varnothing}(\cdot)=\epsilon \nu_0+\int_{\text{ex}\mathscr{G}_{\beta}\setminus\{\nu_0\}} \nu(\cdot) \lambda_{\mu^{\varnothing}_{\beta}}(d\nu).\]
	Therefore,
	\[\mu_{\beta}^{\varnothing} \otimes \mu_{\beta}^{\varnothing}\left(\{(\eta^{(1)},\eta^{(2)}): \mu^{\eta^{(1)}}=\mu^{\eta^{(2)}}\}\right) \geq \mu_{\beta}^{\varnothing} \otimes \mu_{\beta}^{\varnothing}(\mu^{\eta^{(1)}}=\mu^{\eta^{(2)}}=\nu_0)=\epsilon^2,\]
	and
	\begin{align*}
		&\quad\mu_{\beta}^{\varnothing} \otimes \mu_{\beta}^{\varnothing}\left(\{(\eta^{(1)},\eta^{(2)}): \mu^{\eta^{(1)}}=\mu^{\eta^{(2)}}\}\right) \leq \mu_{\beta}^{\varnothing} \otimes \mu_{\beta}^{\varnothing}(\mu^{\eta^{(1)}}=\mu^{\eta^{(2)}}=\nu_0)\\
		&\qquad\qquad+\mu_{\beta}^{\varnothing} \otimes \mu_{\beta}^{\varnothing}(\mu^{\eta^{(1)}}\in \text{ex}\mathscr{G}_{\beta}\setminus \{\nu_0\}, \mu^{\eta^{(2)}}\in \text{ex}\mathscr{G}_{\beta}\setminus \{\nu_0\})\\
		&\qquad=\epsilon^2+(1-\epsilon)^2<1,
	\end{align*}
	which contradicts Proposition \ref{Prop:01law}. If the support of $\lambda_{\mu^{\varnothing}_{\beta}}$ were countable, then of course $\lambda_{\mu^{\varnothing}_{\beta}}$ would have an atom. This completes the proof of the theorem.
\end{proof}

\section{Generalizations to the Potts model with external field}\label{sec:gen}
The $q$-state Potts model at inverse temperature $\beta$ on $\Lambda \Subset V(\mathbb{T}^d)$ with uniform external field $\vec{H}:=(H_1,\dots,H_q)\in \mathbb{R}^q$ and boundary conditions $\eta \in \Omega_0^{\Lambda^c}$ is the probability measure
\begin{equation}\label{eq:Pottsdef1}
	\mu^{\eta}_{\Lambda,\beta,\vec{H}}(\sigma) \propto \exp\left[\beta\sum_{u,v\in\Lambda: u\sim v}\delta_{\sigma_u,\sigma_v}+\beta\sum_{u\in\Lambda, v\in \Lambda^c: u\sim v}\delta_{\sigma_u,\eta_v}+\sum_{u\in \Lambda} H_{\sigma_u}\right],\quad \forall \sigma \in \Omega_0^{\Lambda}.
\end{equation}
A measure $\mu$ on $\Omega$ is called a Gibbs measure if it satisfies the DLR equations \eqref{eq:DLR}. Let $\mathscr{G}_{\beta,\vec{H}}$ be the set of all Gibbs measures. The set of all extremal Gibbs measures, $\text{ex}\mathscr{G}_{\beta,\vec{H}}$, is defined in the same way as $\text{ex}\mathscr{G}_{\beta}$. A similar extremal decomposition formula as \eqref{eq:extremaldecom} also holds for $\mu \in \mathscr{G}_{\beta,\vec{H}}$. We focus on the set of \textit{completely homogeneous splitting Gibbs measures (CHSGMs)} $\mu \in \mathscr{G}_{\beta,\vec{H}}$ defined in \cite{BR19}. First, there is $\vec{h}\in \mathbb{R}^q$ such that
\begin{equation}\label{eq:CHSGMdef}
	\mu(\sigma_{V_n}=s) \propto \exp\left[\beta\sum_{uv\in E(V_n)}\delta_{s_u,s_v}+\sum_{u\in V_n}H_{s_u}+\sum_{u\in W_n}h_{s_u}\right], \ \forall n\in \mathbb{N}, s\in \Omega_0^{V_n},
\end{equation}
then by Theorem 2.1 and Corollary 2.7 of \cite{BR19}, $\mu$ is a CHSGM if and only if $\vec{H}$ and $\vec{h}$ satisfy
\begin{equation}\label{eq:CHSGMiff}
	h_i-h_q=d \ln \frac{\exp[\beta+H_i+h_i]+\sum_{j \neq i}\exp[H_j+h_j]}{\exp[\beta+H_q+h_q]+\sum_{j\neq q}\exp[H_j+h_j]}, \ \forall i\in \Omega_0.
\end{equation}

Proposition \ref{Prop:extrequi} is valid for measures in $\mathscr{G}_{\beta,\vec{H}}$ with the same proof; Proposition \ref{Prop:r} is also valid  for measures in $\mathscr{G}_{\beta,\vec{H}}$ but in the proof one needs to include the external field. Let $\mu \in \mathscr{G}_{\beta,\vec{H}}$ be a CHSGM defined by \eqref{eq:CHSGMdef} and \eqref{eq:CHSGMiff}. Let
\[\tilde{V}_n:=\{x\in V(\tilde{\mathbb{T}}^d): d(x,o)\leq n\}, \tilde{W}_n:=\{x\in V(\tilde{\mathbb{T}}^d): d(x,o)= n\}.\]
Let $\mu_{\tilde{\mathbb{T}}^d}$ be a measure defined on $\Omega_0^{V(\tilde{\mathbb{T}}^d)}$ such that
\begin{equation}\label{eq:mutilde}
	\mu_{\tilde{\mathbb{T}}^d}(\sigma_{\tilde{V}_n}=s) \propto \exp\left[\beta\sum_{uv\in E(\tilde{V}_n)}\delta_{s_u,s_v}+\sum_{u\in \tilde{V}_n}H_{s_u}+\sum_{u\in \tilde{W}_n}h_{s_u}\right], \ \forall n\in \mathbb{N}\cup\{0\}, s\in \Omega_0^{\tilde{V}_n}.
\end{equation}
Then it is easy to check that $\mu_{\tilde{\mathbb{T}}^d}$ is a Gibbs measure (see, e.g., p. 7 of \cite{BR19} for a similar proof). The following lemma is analogous to Lemma \ref{lem:3prop} but holds for general CHSGMs $\mu$ and their corresponding $\mu_{\tilde{\mathbb{T}}^d}$.

\begin{lemma}\label{lem:3propfield}
	\begin{enumerate}[(1)]
		\item Under the conditional measure $\mu_{\tilde{\mathbb{T}}^d}^i:=\mu_{\tilde{\mathbb{T}}^d}(\cdot \mid \eta_o=i)$, $\left. \eta \right |_{\tilde{\mathbb{T}}_x^d}$  for all $x\in \tilde{W}_1$ are independent and identically distributed.
		\item For each $x_1\in \tilde{W}_1$, $\mu_{\tilde{\mathbb{T}}^d}^i(\cdot \mid \eta_{x_1}=j)$ restricted to $\tilde{\mathbb{T}}_{x_1}^d$ and $\mu_{\tilde{\mathbb{T}}^d}^j$ have the same distribution up to the shift $T$:
		\[\mu_{\tilde{\mathbb{T}}^d}^i(T(E) \mid \eta_{x_1}=j) = \mu_{\tilde{\mathbb{T}}^d}^j(E), \ \forall E \in \mathcal{F}_{V(\tilde{\mathbb{T}}^d)}.\]
		\item For each $x\in W_1$ and $s\in \Omega_0^{V_1}$,
		\begin{equation}
			\mu(S(E) \mid \eta_{V_1}=s) = \mu_{\tilde{\mathbb{T}}^d}(E \mid \eta_o=s_x),\ \forall E \in \mathcal{F}_{V(\tilde{\mathbb{T}}^d)}.
		\end{equation}
	\end{enumerate}
\end{lemma}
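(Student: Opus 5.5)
The plan is to replace the Edwards--Sokal argument of Lemma~\ref{lem:3prop}, which is unavailable in the presence of an external field, with a direct Markov chain description of $\mu_{\tilde{\mathbb{T}}^d}$ obtained from the explicit marginals \eqref{eq:mutilde}. The main claim will be that $\mu_{\tilde{\mathbb{T}}^d}$ is a tree-indexed Markov chain. The root has some law $\pi$. Each child, given its parent's spin $i$, takes spin $j$ independently of everything else, with transition probability
\[
P_{ij}=\frac{\exp[\beta\delta_{i,j}+H_j+\tilde h_j]}{\sum_{k}\exp[\beta\delta_{i,k}+H_k+\tilde h_k]},
\]
where $\tilde h$ is a vector to be determined.

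To get this, I will fix $n$ and sum \eqref{eq:mutilde} at level $n+1$ over the spins on $\tilde W_{n+1}$. This yields the level-$n$ weight with boundary term
\[
d\ln\sum_j \exp[\beta\delta_{s_u,j}+H_j+h_j]
\]
at each $u\in\tilde W_n$. By \eqref{eq:CHSGMiff}, this term equals $h_{s_u}$ plus a constant independent of $s_u$, which is exactly the consistency of \eqref{eq:mutilde}. Consequently the conditional law of the spins on $\tilde W_{n+1}$, given the spins on $\tilde V_n$, factorizes over vertices of $\tilde W_{n+1}$. Each factor depends only on the parent's spin and equals $P_{ij}$ with $\tilde h=h$.

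Iterating this over levels gives the following description. Conditioned on $\eta_o=i$, the configuration is generated by running independent copies of this chain down each of the $d$ subtrees rooted in $\tilde W_1$. Every vertex of $\tilde{\mathbb{T}}^d$ other than the root has exactly $d$ children, so each subtree $\tilde{\mathbb{T}}^d_x$ is itself a $d$-ary tree. The copies therefore use the same transition matrix $P$, the same offspring structure, and the same initial step $P_{i\cdot}$, which gives property (1).

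For property (2), I condition further on $\eta_{x_1}=j$. By the Markov property, the restriction to $\tilde{\mathbb{T}}^d_{x_1}$ is the chain started from $j$ at $x_1$ on a $d$-ary tree. Pulled back by $T$, this is exactly the chain on $\tilde{\mathbb{T}}^d$ started from $j$, which is $\mu^j_{\tilde{\mathbb{T}}^d}$. I will verify the identity on cylinder events $E$ and then extend it by a $\pi$-$\lambda$ argument.

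For property (3), I run the same computation for $\mu$ on $\mathbb{T}^d$ using \eqref{eq:CHSGMdef}. Given $\eta_{V_1}=s$, summing out the outer levels shows that the subtrees $\mathbb{T}^d_x$ for $x\in W_1$ are conditionally independent. Each is the $P$-chain started at $s_x$ on a $d$-ary tree, since in $\mathbb{T}^d$ every vertex other than $o$ also has $d$ children. Under $S$, this is $\mu_{\tilde{\mathbb{T}}^d}(\cdot\mid\eta_o=s_x)$.

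I expect the main obstacle to be the telescoping marginalization: checking carefully that \eqref{eq:CHSGMiff} makes the normalizing factor after summing out a level independent of the parent spin, so that the transition kernel does not depend on $n$. The external field $H$ also has to be placed so that it counts once per vertex, entering through the child's factor in $P$.

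A second point needs care: the root law in \eqref{eq:mutilde} differs from the root law in $\mu$, because the root has $d$ rather than $d+1$ neighbours. This is why I will state everything conditionally on the root spin, where the difference disappears.
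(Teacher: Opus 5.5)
Your proposal is correct and follows essentially the same route as the paper. The paper also proves all three properties by direct computation from the explicit marginals \eqref{eq:mutilde} and \eqref{eq:CHSGMdef}: the weights factorize over subtrees, and the subtree weight matches the shifted level-$(n-1)$ weight. Your explicit tree-indexed Markov chain with kernel $P_{ij}\propto \exp[\beta\delta_{i,j}+H_j+h_j]$ just spells out what the paper leaves implicit, namely the consistency step through \eqref{eq:CHSGMiff} and the different root laws.
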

\begin{proof}
	Property (1) is the splitting property, which can be derived directly from~\eqref{eq:mutilde}. Property (2) follows since by \eqref{eq:mutilde},
	\begin{align*}
		&\mu_{\tilde{\mathbb{T}}^d}^i(\eta_{\tilde{V}_n \cap V(\tilde{\mathbb{T}}_{x_1}^d)}=Ts \mid \eta_{x_1}=j) = \mu_{\tilde{\mathbb{T}}^d}(\eta_{\tilde{V}_n \cap V(\tilde{\mathbb{T}}_{x_1}^d)}=Ts \mid \eta_{x_1}=j, \eta_{o}=i)\\
		&\qquad =\mu_{\tilde{\mathbb{T}}^d}(\eta_{\tilde{V}_{n-1}}=s \mid \eta_o=j), \ \forall n\in \mathbb{N}, s\in \Omega_0^{\tilde{V}_{n-1}} \text{ with } s_o=j,
	\end{align*}
	where $(Ts)_{Tu}:=s_{u}$ for each $u\in \tilde{V}_{n-1}$. Property (3) follows because by \eqref{eq:CHSGMdef} and \eqref{eq:mutilde},
	\begin{align*}
		&\mu(\eta_{V_n \cap V(\mathbb{T}^d_x)}=St \mid \eta_{V_1}=s) \\
		&\qquad \propto \exp\left[\beta \sum_{uv\in E(V_n \cap \mathbb{T}^d_x)}\delta_{(St)_u,(St)_v}+\sum_{u\in V_n \cap V(\mathbb{T}^d_x)} H_{(St)_u}+\sum_{u\in W_n \cap V(\mathbb{T}^d_x)} h_{(St)_u}\right]\\
		&\qquad \propto \mu_{\tilde{\mathbb{T}}^d}(\eta_{\tilde{V}_{n-1}}=t \mid \eta_0=s_x), \ \forall n\in \mathbb{N}, t\in \Omega_0^{\tilde{V}_{n-1}} \text{ with } t_{o}=s_x.
	\end{align*}
	This completes the proof of the lemma.
\end{proof}

So Proposition \ref{Prop:01law} holds with $\mu_{\beta}^{\varnothing}$ replaced by any CHSGM $\mu$, and thus the same proof as Theorem \ref{thm} gives
\begin{theorem}\label{thm:gen}
	Let $\mu$ be a completely homogeneous splitting Gibbs measure for the $q$-state Potts model on $\mathbb{T}^d$ at inverse temperature $\beta\geq 0$ and uniform external field $\vec{H}\in \mathbb{R}^q$ defined by \eqref{eq:CHSGMdef} and \eqref{eq:CHSGMiff}. Then $\mu$ satisfies the following dichotomy:
	\begin{enumerate}[(i)]
		\item either $\mu$ is extremal, i.e., $\mu \in \textnormal{ex}\mathscr{G}_{\beta,\vec{H}}$,
		\item or the extremal decomposition measure $\lambda_{\mu}$ has no atoms, i.e., $\lambda_{\mu}(\{\nu\})=0$ for each $\nu\in \textnormal{ex}\mathscr{G}_{\beta,\vec{H}}$. In particular, the support of $\lambda_{\mu}$ contains uncountably many extremal measures.
	\end{enumerate}
\end{theorem}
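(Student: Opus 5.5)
The plan follows the proof of Theorem \ref{thm} verbatim, with $\mu_{\beta}^{\varnothing}$ replaced by the CHSGM $\mu$ and the free state $\mu_{\tilde{\mathbb{T}}^d}^{\varnothing}$ on the $d$-ary tree replaced by the measure $\mu_{\tilde{\mathbb{T}}^d}$ of \eqref{eq:mutilde}. Lemma \ref{lem:3propfield} supplies exactly the three structural inputs that the Edwards--Sokal coupling supplied in Lemma \ref{lem:3prop}. The first step is to check that Propositions \ref{Prop:extrequi} and \ref{Prop:r} hold in $\mathscr{G}_{\beta,\vec{H}}$ and on $\tilde{\mathbb{T}}^d$. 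Proposition \ref{Prop:extrequi} uses only the DLR equations and backward martingale convergence, so it carries over unchanged. For Proposition \ref{Prop:r}, the likelihood ratios $r^{(i)}_{n,x}$ are now computed with the field included. The conditional law of a child given its parent is still a ratio of the form $e^{\beta\delta_{i,1}}r^{(i)}_{n,y}/\sum_j e^{\beta\delta_{j,1}}r^{(j)}_{n,y}$, and the finite-dimensional marginals take the form \eqref{eq:findis} with an extra $\sum_{u\in V_m}H_{s_u}$ term. That extra term does not depend on $\eta$, so the identification argument in part (ii) goes through.

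Next I define $c_{ij}$ as in \eqref{eq:c_ijdef} using $\mu_{\tilde{\mathbb{T}}^d}\otimes\mu_{\tilde{\mathbb{T}}^d}$, and set $p_{ij}:=\mu_{\tilde{\mathbb{T}}^d}(\eta_{x_1}=j\mid\eta_o=i)$. Repeating the proof of Lemma \ref{lem:eqns}, but invoking properties (1) and (2) of Lemma \ref{lem:3propfield}, gives the recursion $c_{ij}=[\sum_{k,l}p_{ik}p_{jl}c_{kl}]^d$. The maximum-principle argument in the proof of Proposition \ref{Prop:01law} then requires $p_{ik}>0$ for all $i,k$. By \eqref{eq:mutilde} with $n=1$, $p_{ik}$ is proportional to $\exp[\beta\delta_{i,k}+H_k+h_k]$, which is strictly positive because $\vec{H},\vec{h}\in\mathbb{R}^q$ are finite. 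Hence all $c_{ij}$ equal a common value $M\in\{0,1\}$. I then condition on $\eta^{(1)}_{V_1},\eta^{(2)}_{V_1}$ and use property (3) of Lemma \ref{lem:3propfield} together with the (conditional) independence of the branches at $o$. This independence follows from \eqref{eq:CHSGMdef}, since the weight factorizes over the subtrees $\mathbb{T}^d_x$, $x\in W_1$, once $\eta_{V_1}$ is fixed. The result is $\mu\otimes\mu(\mu^{\eta^{(1)}}=\mu^{\eta^{(2)}})=M^{d+1}\in\{0,1\}$.

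Finally, the atom argument from the proof of Theorem \ref{thm} applies word for word. If $\mu$ is not extremal and $\lambda_\mu$ has an atom of mass $\epsilon\in(0,1)$, then the overlap lies in $[\epsilon^2,\epsilon^2+(1-\epsilon)^2]\subset(0,1)$, which contradicts the zero-one law. A measure with countable support would have an atom, which gives the uncountability claim. Note that $\epsilon<1$ holds because $\epsilon=1$ would force $\mu=\nu_0$ to be extremal.

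I expect the main obstacle to be the bookkeeping in the first step. Specifically, the extension of Proposition \ref{Prop:r} with field must be done carefully, and one must check that the decomposition via $V_1$ really reduces to the $d$-ary tree measure with the same $\vec h$. Property (3) of Lemma \ref{lem:3propfield} addresses this, so the remaining risk is small; the rest is a direct transcription.
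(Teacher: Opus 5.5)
Your proposal is correct and follows the paper's route exactly. Lemma~\ref{lem:3propfield} takes the place of the Edwards--Sokal-based Lemma~\ref{lem:3prop}, which gives the recursion of Lemma~\ref{lem:eqns}, then the zero-one law of Proposition~\ref{Prop:01law} for the CHSGM $\mu$, and finally the atom argument from the proof of Theorem~\ref{thm}. The details you supply that the paper leaves implicit are all sound: $p_{ik}\propto \exp[\beta\delta_{i,k}+H_k+h_k]>0$ by \eqref{eq:mutilde}, the branches at $o$ are conditionally independent given $\eta_{V_1}$, and $\epsilon<1$ follows from non-extremality.
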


Of course the free state $\mu_{\beta}^{\varnothing}$ in Theorem \ref{thm} is a CHSGM with $\vec{H}=\vec{h}=\vec{0}$ by \eqref{eq:Kol}, so Theorem \ref{thm} is a particular case of Theorem \ref{thm:gen}. For the classical Ising model (corresponding to $q=2$) with uniform external field $H$, that is,
\[\mu_{\Lambda,\beta,H}^{\eta}(\sigma)\propto \exp\left[\beta\sum_{uv \in E(\Lambda)}\sigma_u\sigma_v+\beta\sum_{uv: u\in \Lambda, v\in \Lambda^c}\sigma_u\sigma_v+H\sum_{u\in \Lambda}\sigma_u\right], \ \forall \sigma\in \{-1,+1\}^{\Lambda}.\]
It is known since \cite{Pre74} (see also \cite{Hig77} and Theorem 2.8 of \cite{BR19}) that, for each $\beta >\arctanh(1/d)$, there exists $H_c(\beta)>0$ such that 
\begin{equation*}
	\text{the total number of CHSGMs is }\begin{cases}
		2, & \beta>\arctanh(1/d) \text{ and }H=\pm H_c(\beta),\\
		3, & \beta>\arctanh(1/d) \text{ and }|H|<H_c(\beta),\\
		1, & \text{otherwise}.
	\end{cases}
\end{equation*}
When there are $3$ CHSGMs, it is still an open question to determine the exact $(\beta,H)$ region such that the middle state is nonextremal (and thus glassy by Theorem \ref{thm:gen}). We refer to \cite{KRK14,KR17,BR19} for more results on the CHSGMs for general $q$ where the CHSGMs are also known as the translation-invariant splitting Gibbs measures (TISGMs)  in \cite{KRK14,KR17}.

\section*{Acknowledgments}
This research was partially supported by National Natural Science Foundation of China (No. 12271284 and No. 12226001). The authors thank Chuck Newman and Dan Stein for valuable comments. They also thank Utkir Rozikov for providing useful references.

\bibliographystyle{abbrv}
\bibliography{reference}

\end{document}